\documentclass[12pt,twoside]{amsart}
\usepackage{amssymb,amsmath,amsthm, amscd, enumerate, mathrsfs}
\usepackage{graphicx, hhline}
\usepackage[all]{xy}
\title[On log canonical rings]
{On the log canonical ring of projective plt pairs 
with the Kodaira dimension two}
\author{Osamu Fujino and Haidong Liu}
\date{2019/4/9, version 0.13}
\subjclass[2010]{Primary 14E30; Secondary 14N30}
\keywords{log canonical ring, plt, canonical bundle formula}
\address{Department of Mathematics, Graduate School of Science, 
Osaka University, Toyonaka, Osaka 560-0043, Japan}
\email{fujino@math.sci.osaka-u.ac.jp}
\address{Department of Mathematics, Graduate School of Science, 
Kyoto University, Kyoto 606-8502, Japan}
\email{jiuguiaqi@gmail.com}

\DeclareMathOperator{\Supp}{Supp}

\newtheorem{thm}{Theorem}[section]
\newtheorem{lem}[thm]{Lemma}

\newtheorem{conj}[thm]{Conjecture}
\newtheorem{cor}[thm]{Corollary}

\theoremstyle{definition}

\newtheorem{rem}[thm]{Remark}
\newtheorem*{ack}{Acknowledgments}


\makeatletter
    
    \@addtoreset{equation}{section}
\makeatother

\setlength{\topmargin}{-1cm}
\setlength{\oddsidemargin }{-1pt}
\setlength{\evensidemargin }{-1pt}
\setlength{\textwidth}{460pt}
\setlength{\textheight}{25cm}
\begin{document}

\begin{abstract}
The log canonical ring of a projective 
plt pair with the Kodaira dimension two 
is finitely generated. 
\end{abstract}

\maketitle 
\tableofcontents

\section{Introduction}\label{g-sec1}
One of the most important open problems in the theory of 
minimal models for higher-dimensional algebraic varieties 
is the finite generation of 
log canonical rings for lc pairs. 

\begin{conj}\label{g-conj1.1}
Let $(X, \Delta)$ be a projective 
lc pair defined over $\mathbb C$ such that 
$\Delta$ is a $\mathbb Q$-divisor on $X$. 
Then the log canonical ring 
$$
R(X, \Delta):=\bigoplus _{m\geq 0}H^0(X, \mathcal 
O_X(\lfloor m(K_X+\Delta)\rfloor))
$$ is a finitely generated $\mathbb C$-algebra. 
\end{conj}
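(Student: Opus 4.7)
The plan is to split on the Kodaira dimension $\kappa=\kappa(K_X+\Delta)$. If $\kappa=-\infty$ then $R(X,\Delta)=\mathbb{C}$ and there is nothing to do, so assume $K_X+\Delta$ is pseudo-effective. If $\kappa=\dim X$, i.e.\ $K_X+\Delta$ is big, the natural route is to produce a log canonical model: after a $\mathbb{Q}$-factorial dlt modification I would run a $(K_X+\Delta)$-MMP with scaling, and then argue that the output is semi-ample, so that $R(X,\Delta)$ is realized as the section ring of a big and semi-ample divisor on the log canonical model, which is automatically finitely generated.

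For the intermediate range $0\le\kappa<\dim X$ the standard strategy is to pass to the Iitaka fibration. After birational modifications we may assume there is a morphism $f\colon X\to Y$ with $\dim Y=\kappa$ such that $(X,\Delta)$ is still lc. I would then invoke the canonical bundle formula of Ambro and Fujino--Mori to write
\[
K_X+\Delta\sim_{\mathbb{Q}}f^{*}(K_Y+B_Y+M_Y),
\]
where $B_Y$ is the discriminant and $M_Y$ the moduli $\mathbb{Q}$-divisor, and then read off
\[
R(X,\Delta)\cong R(Y,B_Y+M_Y)
\]
(after truncation and up to a finite ring extension). The aim is to reduce finite generation of the left-hand side to finite generation on the lower-dimensional base, and then close by induction on $\dim X$.

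The principal obstacle with this program is precisely the reason the statement is still a conjecture. On the one hand, even in the big case one needs the full MMP, termination with scaling, and abundance for lc pairs, all of which remain open in general; one can get around this by combining a dlt modification with BCHM for klt pairs together with a careful perturbation, but abundance is unavoidable. On the other hand, the base pair $(Y,B_Y+M_Y)$ is naturally a generalized lc pair rather than an honest lc pair, and $M_Y$ is only conjecturally $\mathbf{b}$-semi-ample, so the induction cannot be carried out purely inside the lc category. The genuinely hard step, where I expect the main difficulty to lie, is establishing $\mathbf{b}$-semi-ampleness of $M_Y$ (or at least enough finite generation of its section ring) together with abundance on $Y$ for $K_Y+B_Y+M_Y$.

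In the situation relevant to the present paper, namely a plt pair with $\kappa=2$, the base $Y$ in the above reduction is a surface, where Zariski decomposition and the classification of surface singularities are available. I would therefore try to combine the canonical bundle formula with a direct surface argument for the generalized pair $(Y,B_Y+M_Y)$, and expect that the plt hypothesis is used to keep the coefficients of $B_Y$ and the structure of $M_Y$ under control, making the two-dimensional base tractable without appealing to the full abundance conjecture.
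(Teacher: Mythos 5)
You were asked to prove what the paper itself states only as Conjecture \ref{g-conj1.1}; the paper offers no proof of it, and your proposal, quite rightly, does not amount to one either. The two obstructions you isolate are exactly the ones the paper points to in its introduction: by the cited work of Fujino--Gongyo, Conjecture \ref{g-conj1.1} is essentially equivalent to the existence of good minimal models in lower dimensions (so abundance really is unavoidable, even in the big case), and on the base of the Iitaka fibration one is left with a generalized pair $(Y, B_Y+M_Y)$ whose moduli part $M_Y$ is only conjecturally b-semi-ample. So there is no hidden gap to report beyond the ones you name yourself; the ``proof'' correctly terminates in an honest acknowledgement that the statement is open. One small inaccuracy: ruling out $\kappa=-\infty$ leaves you with $\kappa\geq 0$, which is what you actually use, not mere pseudo-effectivity.

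Your final paragraph anticipates the paper's Theorem \ref{g-thm1.2} reasonably well in spirit, but the mechanism the paper uses is worth recording because it differs from your guess. There is no Zariski decomposition and no classification of surface singularities. After the canonical bundle formula and a toroidal/equidimensional reduction, the paper uses only the \emph{nefness} of the moduli divisor $M_Y$ (Theorem \ref{g-thm5.2}), a surface MMP for $K_Y+B_Y+M_Y$ that preserves nefness of the moduli part (Lemma \ref{g-lem6.1}), and --- this is where plt enters --- a connectedness-type lemma (Lemma \ref{g-lem4.1}) showing that the components of $B_V^{=1}$ lying over a fixed divisor of $Y$ all coincide, so that $\lfloor B_Y\rfloor$ is a disjoint union of smooth curves and $(Y, B_Y)$ is plt. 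The restriction of $D$ to each component of $\lfloor B_Y\rfloor$ is then semi-ample by Ambro's theorem for klt-trivial fibrations over curves (Theorem \ref{g-thm5.3}), and Kawamata's refinement of the basepoint-free theorem (Lemma \ref{g-lem4.3}) upgrades this to semi-ampleness of $D$ itself. In particular, the b-semi-ampleness of $M_Y$ that you flag as the hard step is circumvented entirely in dimension two: nefness plus the basepoint-free theorem suffices.
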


In \cite{fujino-gongyo2}, Yoshinori Gongyo and the first author 
showed that Conjecture \ref{g-conj1.1} is closely related to the 
abundance conjecture and is essentially equivalent 
to the existence problem of good minimal models for 
lower-dimensional varieties. 
Therefore, Conjecture \ref{g-conj1.1} is thought to be a very 
difficult open problem. 

When $(X, \Delta)$ is klt, Shigefumi Mori and the first author 
showed that 
it is sufficient to prove Conjecture \ref{g-conj1.1} under the 
extra assumption that $K_X+\Delta$ is big in \cite{fujino-mori}. 
Then Birkar--Cascini--Hacon--M\textsuperscript{c}Kernan completely 
solved Conjecture \ref{g-conj1.1} for projective klt 
pairs in \cite{bchm}. More generally, in \cite{fujino-some}, 
the first author slightly generalized a canonical bundle formula in 
\cite{fujino-mori} and showed that 
Conjecture \ref{g-conj1.1} holds true even 
when $X$ is in Fujiki's class $\mathcal C$ 
and $(X, \Delta)$ is klt. 
We note that Conjecture \ref{g-conj1.1} is not necessarily true when 
$X$ is not in Fujiki's class $\mathcal C$ (see \cite{fujino-some} 
for the details). 
Anyway, we have already established the finite generation of log canonical 
rings for klt pairs. So we are mainly interested in 
Conjecture \ref{g-conj1.1} for $(X, \Delta)$ 
which is lc but is not klt. 

If $(X, \Delta)$ is lc, then we have already known that 
Conjecture \ref{g-conj1.1} holds true when $\dim X\leq 4$ (see \cite{fujino-finite}). 
If $(X, \Delta)$ is lc and $\dim X=5$, 
then Kenta Hashizume showed that Conjecture \ref{g-conj1.1} holds true 
when $\kappa (X, K_X+\Delta)<5$ in \cite{hashizume}. We note that 
$$
\bigoplus _{m\geq 0} H^0(X, \mathcal O_X(\lfloor mD\rfloor))
$$ 
is always a finitely generated $\mathbb C$-algebra when $X$ is 
a normal projective variety and $D$ is a $\mathbb Q$-Cartier 
$\mathbb Q$-divisor on $X$ with $\kappa (X, D)\leq 1$. 
Therefore, the following theorem is the 
first nontrivial step towards the complete 
solution of Conjecture \ref{g-conj1.1} for higher-dimensional algebraic varieties. 

\begin{thm}[Main Theorem]\label{g-thm1.2}
Let $(X, \Delta)$ be a projective plt pair such 
that $\Delta$ is a $\mathbb Q$-divisor. 
Assume that $\kappa (X, K_X+\Delta)=2$. 
Then the log canonical ring 
$$
R(X, \Delta)=\bigoplus _{m\geq 0}H^0(X, \mathcal 
O_X(\lfloor m(K_X+\Delta)\rfloor))
$$ 
is a finitely generated $\mathbb C$-algebra. 
\end{thm}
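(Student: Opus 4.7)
The plan is to use the Iitaka fibration associated to $K_X+\Delta$ together with a canonical bundle formula in order to reduce finite generation on $X$ to finite generation of a log canonical ring on a projective surface, where the problem is classical. Since $\kappa(X,K_X+\Delta)=2$, the Iitaka fibration $\varphi\colon X\dashrightarrow Y$ has a two-dimensional base. After replacing $X$ by a suitable log resolution and correspondingly modifying $\Delta$, I would assume that $\varphi$ is a morphism onto a smooth surface $Y$, with connected generic fiber $X_\eta$ satisfying $(K_X+\Delta)|_{X_\eta}\sim_{\mathbb{Q}}0$. The plt hypothesis makes $\lfloor\Delta\rfloor$ a disjoint union of normal prime divisors; separating these into horizontal and vertical components and performing plt adjunction along any horizontal component puts us in a position to apply the canonical bundle formula in the spirit of \cite{fujino-mori}.

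Carrying this out, one obtains, after a further birational modification, an expression
\[
K_X+\Delta\sim_{\mathbb{Q}}\varphi^{*}\bigl(K_Y+B_Y+M_Y\bigr),
\]
where $B_Y$ is the discriminant and $M_Y$ the moduli $\mathbb{Q}$-divisor. Standard descent of pluri-log-canonical sections along $\varphi$, combined with the projection formula, yields an isomorphism of suitable Veronese subrings
\[
R(X,\Delta)^{(N)}\;\cong\;R(Y,K_Y+B_Y+M_Y)^{(N)}
\]
for some positive integer $N$. Hence finite generation of $R(X,\Delta)$ is reduced to finite generation of the log canonical ring of the surface pair $(Y,B_Y+M_Y)$.

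To conclude, one then shows that, after possibly passing to a higher birational model of $Y$, the moduli part $M_Y$ is semi-ample; equivalently, $M_Y\sim_{\mathbb{Q}}H$ for some effective $\mathbb{Q}$-divisor $H$ with $(Y,B_Y+H)$ log canonical. Once this is in hand, $K_Y+B_Y+H$ is a big $\mathbb{Q}$-Cartier divisor on a normal projective lc surface, so finite generation of the associated log canonical ring follows from the surface case of \cite{bchm} (or, more classically, from Zariski's theorem combined with the basepoint-free theorem on surfaces).

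The main obstacle is this $b$-semi-ampleness of the moduli divisor $M_Y$ on $Y$; everything else rests on standard Iitaka theory, the canonical bundle formula, or the classical birational geometry of surfaces. The plt assumption, rather than merely lc, is exactly what makes this obstacle tractable: it guarantees that the generic fiber of $\varphi$ is a klt log Calabi--Yau pair (after adjunction along the horizontal components of $\lfloor\Delta\rfloor$) and thereby brings the situation within the reach of the known $b$-semi-ampleness results on a surface base, while at the same time keeping the discriminant part $B_Y$ under effective control at codimension-one points of $Y$.
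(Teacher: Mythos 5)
Your reduction to the surface $Y$ via the Iitaka fibration and the canonical bundle formula matches the first half of the paper's argument (the paper additionally passes through a weak semistable reduction in the sense of Abramovich--Karu to make the fibration equidimensional and to control the discriminant, and it isolates an effective divisor $F$ with $\mathcal O_Y\overset{\sim}{\to} f_*\mathcal O_X(\lfloor iF\rfloor)$ so that a Veronese subring of $R(X,\Delta)$ is identified with $\bigoplus_m H^0(Y,\mathcal O_Y(\lfloor mD\rfloor))$ for $D=K_Y+B_Y+M_Y$ big). The gap is in your final step. You propose to prove that the moduli part $M_Y$ is semi-ample and assert that the plt hypothesis ``brings the situation within the reach of the known $b$-semi-ampleness results on a surface base.'' No such results exist in the generality you need: semi-ampleness of the moduli $b$-divisor of a klt-trivial fibration is known only over a one-dimensional base (Ambro, \cite{ambro-shokurov}); over a surface base it is the open $b$-semi-ampleness conjecture of Prokhorov--Shokurov, and arranging for the generic fiber to be a klt log Calabi--Yau pair does not change that. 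So the step you yourself identify as ``the main obstacle'' is not closed by your argument, and what is known (Theorem \ref{g-thm5.2}) gives only nefness of $M_{Y'}$ on a higher model, which by itself does not yield finite generation for the plt pair $(Y,B_Y)$.

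The paper never proves $M_Y$ semi-ample. Instead it exploits the plt hypothesis on the base: by the connectedness statement (Lemma \ref{g-lem4.1}) the reduced part $\lfloor B_Y\rfloor$ is a disjoint union of smooth curves, so $(Y,B_Y)$ is plt. After running a $(K_Y+B_Y+M_Y)$-MMP (Lemma \ref{g-lem6.1}) one may assume $D=K_Y+B_Y+M_Y$ is nef and big. Restricting the lc-trivial fibration to a component $S$ of $B_V^{=1}$ lying over a component $Z$ of $\lfloor B_Y\rfloor$ produces, by adjunction, a klt-trivial fibration over the \emph{curve} $Z$ (Corollary \ref{g-cor4.2}), where Ambro's theorem does apply and gives that $D|_Z$ is semi-ample (Corollary \ref{g-cor5.4}). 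Since $2D-(K_Y+B_Y)=D+M_Y$ is nef and big ($M_Y$ being nef by Theorem \ref{g-thm5.2} together with Lemma \ref{g-lem2.1}), Kawamata's refinement of the basepoint-free theorem (Lemma \ref{g-lem4.3}) then yields semi-ampleness of $D$ itself, which is all that finite generation requires. To repair your write-up, replace the appeal to semi-ampleness of $M_Y$ by this restriction-to-the-boundary plus basepoint-free argument.
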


In this paper, we will describe the proof of Theorem \ref{g-thm1.2}. 

\begin{ack}
The first author was partially 
supported by JSPS KAKENHI Grant Numbers 
JP16H03925, JP16H06337. 
The authors would like to thank Kenta Hashizume for useful 
discussions. A question he asked is one of the motivations of this paper. 
Finally they thank the referee for useful comments. 
\end{ack}

We will work over $\mathbb C$, the complex number field, throughout 
this paper. We will freely use the basic 
notation of the minimal model program as in 
\cite{fujino-fundamental} and \cite{fujino-foundations}. 
In this paper, we do not use $\mathbb R$-divisors. 
We only use $\mathbb Q$-divisors. 

\section{$\mathbb Q$-divisors} 
Let $D$ be a $\mathbb Q$-divisor on a normal variety $X$, that is, 
$D$ is a finite formal sum $\sum _i d_i D_i$ where 
$d_i$ is a rational number and $D_i$ is a prime divisor 
on $X$ for every $i$ such that $D_i\ne D_j$ for $i\ne j$. 
We put 
$$
D^{<1}=\sum _{d_i<1}d_iD_i, \quad 
D^{\leq 1}=\sum _{d_i\leq 1}d_i D_i, \quad 
\text{and} \quad D^{=1}=\sum _{d_i=1}D_i. 
$$
We also put 
$$
\lceil D\rceil =\sum _i \lceil d_i \rceil D_i, \quad 
\lfloor D\rfloor=-\lceil -D\rceil, \quad \text{and} 
\quad 
\{D\}=D-\lfloor D\rfloor, 
$$
where $\lceil d_i\rceil$ is the integer defined by 
$d_i\leq \lceil d_i\rceil <d_i+1$. A $\mathbb Q$-divisor 
$D$ on a normal variety $X$ is called a boundary $\mathbb Q$-divisor 
if $D$ is effective and $D=D^{\leq 1}$ holds. 

Let $B_1$ and $B_2$ be two $\mathbb Q$-divisors 
on a normal variety $X$. Then 
we write $B_1\sim _{\mathbb Q} B_2$ if there exists a positive integer $m$ 
such that $mB_1\sim mB_2$, that is, 
$mB_1$ is linearly equivalent to $mB_2$. 

Let $f:X\to Y$ be a proper surjective morphism 
between normal varieties and let $D$ be a $\mathbb Q$-Cartier 
$\mathbb Q$-divisor on $X$. 
Then we write $D\sim _{\mathbb Q, f}0$ if there exists a $\mathbb Q$-Cartier 
$\mathbb Q$-divisor $B$ on $Y$ such that $D\sim _{\mathbb Q}f^*B$. 

\bigskip 

Let $D$ be a $\mathbb Q$-Cartier $\mathbb Q$-divisor 
on a normal projective variety $X$. Let $m_0$ be a positive integer 
such that $m_0D$ is a Cartier divisor. 
Let $$\Phi_{|mm_0D|}: X\dashrightarrow \mathbb P^{\dim\!|mm_0D|}$$ be 
the rational map given by the complete linear system $|mm_0D|$ 
for a positive integer $m$. 
We put 
$$\kappa (X, D):=\max_m \dim \Phi_{|mm_0D|}(X)$$ if 
$|mm_0D|\ne \emptyset$ for some $m$ and 
$\kappa (X, D)=-\infty$ otherwise. 
We call $\kappa(X, D)$ the Iitaka dimension 
of $D$. 
Note that $\Phi_{|mm_0D|}(X)$ denotes the 
closure of the image of the rational map $\Phi_{|mm_0D|}$. 

Let $D$ be a $\mathbb Q$-Cartier $\mathbb Q$-divisor on a normal projective 
variety $X$. If $D\cdot C\geq 0$ for every curve $C$ on $X$, 
then we say that $D$ is nef. 
If $\kappa(X, D)=\dim X$ holds, then we say that $D$ is big. 

\medskip 

In this paper, we will repeatedly use the following 
well-known easy lemma. 

\begin{lem}\label{g-lem2.1}
Let $\varphi: X\to X'$ be a birational 
morphism between normal projective surfaces and let $M$ be 
a nef $\mathbb Q$-divisor 
on $X$. 
Assume that $M':=\varphi_*M$ is $\mathbb Q$-Cartier. 
Then $M'$ is nef. 
\end{lem}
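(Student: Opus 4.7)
The plan is to test nefness of $M'$ against an arbitrary curve $C'$ on $X'$ by pulling the computation back to $X$, where the nefness of $M$ is available. Let $C$ denote the strict transform of $C'$ on $X$, so that $\varphi_*C=C'$. Set
$$E:=\varphi^*M'-M,$$
a $\mathbb Q$-Cartier $\mathbb Q$-divisor on $X$. Pushing forward yields $\varphi_*E=M'-\varphi_*M=0$, so $E$ is $\varphi$-exceptional. The goal then splits into showing that the two pieces of the decomposition $\varphi^*M'=M+E$ both pair nonnegatively with $C$.

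I would next verify that $-E$ is $\varphi$-nef. For any $\varphi$-exceptional curve $F$ on $X$, one has $\varphi_*F=0$, so by the projection formula $\varphi^*M'\cdot F=M'\cdot\varphi_*F=0$, and hence $-E\cdot F=M\cdot F\geq 0$ because $M$ is nef. The essential input is now the negativity lemma for birational morphisms of normal projective surfaces: a $\varphi$-exceptional $\mathbb Q$-divisor whose negative is $\varphi$-nef is automatically effective. (This rests on the negative-definiteness of the intersection matrix of the $\varphi$-exceptional curves, i.e., Zariski's lemma.) Applying it gives $E\geq 0$.

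To finish, the projection formula gives
$$M'\cdot C'=\varphi^*M'\cdot C=M\cdot C+E\cdot C.$$
Here $M\cdot C\geq 0$ by nefness of $M$. For $E\cdot C$, the strict transform $C$ is not $\varphi$-exceptional, so it shares no component with $\Supp E$; combined with $E\geq 0$ this makes $E\cdot C$ a nonnegative combination of intersection numbers of distinct curves, hence $E\cdot C\geq 0$. Therefore $M'\cdot C'\geq 0$, as desired. The only genuinely non-formal step is the appeal to the negativity lemma; the rest is bookkeeping with the projection formula.
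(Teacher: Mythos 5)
Your proof is correct and follows essentially the same route as the paper: the paper likewise writes $\varphi^*M'=M+E$ with $E$ effective and $\varphi$-exceptional via the negativity lemma and then observes that $(M+E)\cdot C\geq 0$ for every curve $C$ on $X$. You have simply filled in the details the paper leaves implicit (why the negativity lemma applies and why $E\cdot C\geq 0$ for the strict transform), so there is nothing to change.
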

\begin{proof}
By the negativity lemma, we can write 
$\varphi^*M'=M+E$ for some 
effective $\varphi$-exceptional $\mathbb Q$-divisor $E$ 
on $X$. 
We can easily see that $(M+E)\cdot C\geq 0$ for every  
curve $C$ on $X$. 
Therefore, $M'$ is a nef 
$\mathbb Q$-divisor on $X'$. 
\end{proof}

\section{Singularities of pairs}

Let us quickly recall the notion of singularities of pairs. 
For the details, we recommend the 
reader to see \cite{fujino-fundamental} and \cite{fujino-foundations}. 

\medskip 

A pair $(X, \Delta)$ consists of a normal 
variety $X$ and a $\mathbb Q$-divisor 
$\Delta$ on $X$ such that $K_X+\Delta$ is $\mathbb Q$-Cartier. 
Let $f:Y\to X$ be a projective 
birational morphism from a normal variety $Y$. 
Then we can write 
$$
K_Y=f^*(K_X+\Delta)+\sum _E a(E, X, \Delta)E
$$ 
with 
$$f_*\left(\underset{E}\sum a(E, X, \Delta)E\right)=-\Delta, 
$$ 
where $E$ runs over prime divisors on $Y$. 
We call $a(E, X, \Delta)$ the discrepancy of $E$ with 
respect to $(X, \Delta)$. 
Note that we can define the discrepancy $a(E, X, \Delta)$ for 
any prime divisor $E$ over $X$ by taking a suitable 
resolution of singularities of $X$. 
If $a(E, X, \Delta)\geq -1$ (resp.~$>-1$) for 
every prime divisor $E$ over $X$, 
then $(X, \Delta)$ is called sub lc (resp.~sub klt). 
If $a(E, X, \Delta)>-1$ holds for every exceptional divisor $E$ over 
$X$, then $(X, \Delta)$ is called sub plt.  
It is well known that $(X, \Delta)$ is sub lc if it is sub plt. 

\medskip 

Let $(X, \Delta)$ be a sub lc pair. 
If there exist a projective birational morphism 
$f:Y\to X$ from a normal variety $Y$ and a prime divisor $E$ on $Y$ 
with $a(E, X, \Delta)=-1$, then $f(E)$ is called an lc center of 
$(X, \Delta)$. 
We say that $W$ is an lc stratum of $(X, \Delta)$ when $W$ is an lc 
center of $(X, \Delta)$ or $W=X$. 

\medskip 

We assume that $\Delta$ is effective. 
Then $(X, \Delta)$ is 
called lc, plt, and klt if it is sub lc, sub plt, and sub klt, respectively. 
In this paper, we call $\kappa(X, K_X+\Delta)$ the Kodaira 
dimension of $(X, \Delta)$ when $(X, \Delta)$ is a 
projective lc pair. 

\section{Preliminary lemmas}\label{g-sec4}

In this section, we prepare two useful lemmas. 
One of them is a kind of connectedness lemma and will play 
a crucial role in this paper. 
Another one is a well-known generalization of the Kawamata--Shokurov 
basepoint-free theorem, which is essentially due to Yujiro Kawamata. 
We state it explicitly for the reader's convenience. 

\medskip 

The following lemma is a key observation. 
As we mentioned above, it is a kind of connectedness lemma and 
will play a crucial role in this paper. 

\begin{lem}\label{g-lem4.1}
Let $f:V\to W$ be a surjective morphism 
from a smooth projective variety $V$ onto 
a normal projective variety $W$. 
Let $B_V$ be a $\mathbb Q$-divisor 
on $V$ such that 
$K_V+B_V\sim _{\mathbb Q, f}0$, 
$(V, B_V)$ is sub plt, and $\Supp B_V$ is a simple 
normal crossing divisor. 
Assume that 
the natural map 
$$
\mathcal O_W\to f_*\mathcal O_V(\lceil -(B^{<1}_V)\rceil)
$$
is an isomorphism. 
Let $S_i$ be an irreducible component of $B^{=1}_V$ such that 
$f(S_i)\subsetneq W$ for $i=1, 2$. 
We assume that $f(S_1)\cap f(S_2)\ne \emptyset$. 
Then $S_1=S_2$ holds. 
In particular, we have $f(S_1)=f(S_2)$. 
\end{lem}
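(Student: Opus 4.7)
The plan is to reduce the statement to the Kollár--Fujino connectedness principle for non-klt loci, using the sub plt hypothesis to upgrade connectedness of $\Supp B_V^{=1}$ in fibers to the sharper equality $S_1 = S_2$.

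The first observation is that because $(V, B_V)$ is sub plt and $\Supp B_V$ is simple normal crossing, any two distinct irreducible components of $B_V^{=1}$ must be disjoint. Indeed, if $S$ and $S'$ were two such components meeting (necessarily transversally, by the SNC hypothesis) along a codimension-two stratum $T$, then blowing up $V$ along $T$ would produce an exceptional divisor $E$ with discrepancy $a(E, V, B_V) = -1$, contradicting sub plt.

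The second input is the connectedness principle, which in our setting asserts that $\Supp B_V^{=1} \cap f^{-1}(w)$ is connected for every $w \in W$. I would derive it from the short exact sequence
$$0 \to \mathcal O_V(\lceil -B_V \rceil) \to \mathcal O_V(\lceil -(B_V^{<1})\rceil) \to \mathcal O_{B_V^{=1}}\bigl(\lceil -(B_V^{<1})\rceil|_{B_V^{=1}}\bigr) \to 0,$$
which comes from the identity $\lceil -(B_V^{<1})\rceil = \lceil -B_V \rceil + B_V^{=1}$. Pushing forward by $f$ and using $\lceil -B_V \rceil \sim_{\mathbb Q, f} K_V + \{B_V\}$, where $\{B_V\}$ is a simple normal crossing $\mathbb Q$-divisor with coefficients in $[0, 1)$, the Ambro--Fujino relative torsion-freeness theorem forces the connecting morphism to vanish near $w$, yielding a surjection $\mathcal O_W = f_* \mathcal O_V(\lceil -(B_V^{<1})\rceil) \twoheadrightarrow f_* \mathcal O_{B_V^{=1}}(\lceil -(B_V^{<1})\rceil|_{B_V^{=1}})$ in a neighborhood of $w$; cyclicity of the image then rules out a nontrivial decomposition of $\Supp B_V^{=1} \cap f^{-1}(w)$.

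Given these two inputs, fix $w \in f(S_1) \cap f(S_2)$. Both $S_1 \cap f^{-1}(w)$ and $S_2 \cap f^{-1}(w)$ are non-empty closed subsets of the connected set $\Supp B_V^{=1} \cap f^{-1}(w)$; but the components of $B_V^{=1}$ are pairwise disjoint by the first step, so only one such component can meet $f^{-1}(w)$. Hence $S_1 = S_2$, and in particular $f(S_1) = f(S_2)$. The main technical obstacle is the torsion-freeness step: because $K_V + B_V$ is merely $\mathbb Q$-linearly trivial over $W$ rather than $f$-big, the classical Kawamata--Viehweg vanishing does not apply, and one must instead appeal to the Ambro--Fujino torsion-freeness theorem in its form for $f$-numerically trivial boundaries.
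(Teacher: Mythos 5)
Your overall strategy (pairwise disjointness of the components of $B^{=1}_V$ from the sub plt plus simple normal crossing hypotheses, combined with a connectedness statement over the point $w$) is exactly the paper's strategy, and your first observation is correct and is the first line of the paper's proof. However, your second input contains a genuine gap: the claim that $\Supp B^{=1}_V\cap f^{-1}(w)$ is connected for \emph{every} $w\in W$ is false in this generality, because $B^{=1}_V$ may have several \emph{horizontal} components (components dominating $W$). Take $V=\mathbb P^1\times W$, $f$ the second projection, and $B_V=(\{0\}\times W)+(\{\infty\}\times W)$: then $(V,B_V)$ is plt, $K_V+B_V\sim_{\mathbb Q,f}0$, $B^{<1}_V=0$ so $\mathcal O_W\to f_*\mathcal O_V$ is an isomorphism, yet $B^{=1}_V\cap f^{-1}(w)$ is a disjoint pair of points for every $w$. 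Correspondingly, the step where you invoke torsion-freeness to kill the connecting morphism breaks down: torsion-freeness of $R^1f_*\mathcal O_V(\lceil -B_V\rceil)$ only forces $\delta$ to vanish when its \emph{source} is a torsion sheaf (supported on a proper closed subset), and $f_*\mathcal O_{B^{=1}_V}(\lceil -(B^{<1}_V)\rceil|_{B^{=1}_V})$ is not torsion once $B^{=1}_V$ has a horizontal component. In the example above the pushed-forward sequence is $0\to 0\to \mathcal O_W\to \mathcal O_W^{\oplus 2}\xrightarrow{\ \delta\ }\mathcal O_W\to 0$, so $\delta\neq 0$ even though $R^1f_*\mathcal O_V(-B_V)\cong\mathcal O_W$ is torsion-free, exactly as Ambro--Fujino predicts.

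The fix is the key idea you are missing, and it is what the paper does: do not take the whole of $B^{=1}_V$ in the short exact sequence, but only the sub-divisor $T$ consisting of those components of $B^{=1}_V$ that are mapped by $f$ \emph{into} $Z:=f(S_1)\cup f(S_2)$ (after a birational modification making $f^{-1}(Z)\cup\Supp B_V$ simple normal crossing). Then $f_*\mathcal O_T(A|_T)$ is supported in the proper closed subset $Z$, while no lc stratum of $(V,\{B_V\}+B^{=1}_V-T)$ is mapped into $Z$, so \cite[Theorem 6.3 (i)]{fujino-fundamental} really does force $\delta=0$; one then identifies $f_*\mathcal O_V(A-T)$ with $\mathcal I_Z$ and concludes that $T\to Z$ has connected fibers, which together with your (correct) disjointness observation gives $S_1=T=S_2$. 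As written, your argument proves a false intermediate statement and therefore does not establish the lemma.
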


\begin{proof}
We note that $B^{=1}_V$ is a disjoint union of smooth prime divisors since 
$(V, B_V)$ is sub plt and $\Supp B_V$ is a simple normal crossing 
divisor. 
We put $C_i=f(S_i)$ for $i=1, 2$. 
Then we put $Z=C_1\cup C_2$ with the reduced 
scheme structure. 
By taking some suitable birational modification of $V$ and replacing 
$S_i$ with its strict transform for $i=1, 2$, 
we may further assume that 
$f^{-1}(Z)$ is a divisor and that 
$f^{-1}(Z)\cup \Supp B_V$ is contained in a simple normal crossing divisor. 
Let $T$ be the union of the irreducible 
components of $B^{=1}_V$ that are mapped into $Z$ by $f$. 
Let us consider the following short exact sequence 
$$
0\to \mathcal O_V(A-T)\to \mathcal O_V(A)\to \mathcal O_T(A|_T)\to 0
$$ 
with $A=\lceil -(B^{<1}_V)\rceil$. 
Then we obtain the long exact sequence 
\begin{equation*}
0 \longrightarrow 
f_*\mathcal O_V(A-T)\longrightarrow f_*\mathcal O_V(A)\longrightarrow 
f_*\mathcal O_T(A|_T)
\overset{\delta}\longrightarrow R^1f_*\mathcal O_V(A-T)\longrightarrow \cdots.  
\end{equation*}
Note that 
$$
A-T-(K_V+\{B_V\}+B^{=1}_V-T)=-(K_V+B_V)\sim _{\mathbb Q, f}0. 
$$ 
Therefore, by \cite[Theorem 6.3 (i)]{fujino-fundamental}, 
every nonzero local section of $R^1f_*\mathcal O_V(A-T)$ contains 
in its support the $f$-image 
of some lc stratum of $(V, \{B_V\}+B^{=1}_V-T)$. 
On the other hand, the support of $f_*\mathcal O_T(A|_T)$ is 
contained in $Z=f(T)$. 
We note that no lc strata of $(V, \{B_V\}+B^{=1}_V-T)$ are mapped 
into $Z$ by $f$ by construction. 
Therefore, the connecting homomorphism $\delta$ is a zero map. 
Thus we get a short exact sequence 
$$
0\to f_*\mathcal O_V(A-T)\to \mathcal O_W\to f_*\mathcal O_T(A|_T)\to 0. 
$$ 
Since $f_*\mathcal O_V(A-T)$ is contained in $\mathcal O_W$ and 
$f(T)=Z$, 
we have $f_*\mathcal O_V(A-T)=\mathcal I_Z$, where $\mathcal I_Z$ 
is the defining ideal sheaf of $Z$ on $W$. 
Thus, by the above short exact sequence, 
we obtain that the natural 
map $\mathcal O_Z\to f_*\mathcal O_V(A|_T)$ is an isomorphism. 
Hence we obtain 
$$
\xymatrix{
\mathcal O_Z\ar[r]^-{\sim}& f_*\mathcal O_T \ar[r]
^-{\sim}&f_*\mathcal O_T(A|_T). 
}
$$
In particular, $f:T\to Z$ has connected fibers. 
Therefore, $f^{-1}(P)\cap T$ is connected for every $P\in 
C_1\cap C_2$. 
Note that $T$ is a disjoint union of smooth prime divisors 
since $T\leq B^{=1}_V$. 
Thus we get $T=S_1=S_2$ since $S_1, S_2\leq T$. 
\end{proof}

As a corollary of Lemma \ref{g-lem4.1}, 
we have: 

\begin{cor}\label{g-cor4.2}
Let $f:V\to W$ be a surjective morphism 
from a smooth projective variety $V$ onto 
a normal projective variety $W$. 
Let $B_V$ be a $\mathbb Q$-divisor 
on $V$ such that 
$K_V+B_V\sim _{\mathbb Q, f}0$, 
$(V, B_V)$ is sub plt, and $\Supp B_V$ is a simple 
normal crossing divisor. 
Assume that 
the natural map 
$$
\mathcal O_W\to f_*\mathcal O_V(\lceil -(B^{<1}_V)\rceil)
$$
is an isomorphism. 
Let $S$ be an irreducible component of $B^{=1}_V$ such that 
$Z:=f(S)\subsetneq W$. 
We put $K_S+B_S=(K_V+B_V)|_S$ by adjunction. 
Then $(S, B_S)$ is sub klt and the natural map 
$$
\mathcal O_Z\to g_*\mathcal O_S(\lceil -B_S\rceil)
$$ 
is an isomorphism, where $g:=f|_S$. 
In particular, $Z$ is normal. 
\end{cor}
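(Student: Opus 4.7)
My plan is to split the statement into three chained pieces and handle them in sequence, reusing the machinery already set up in the proof of Lemma~\ref{g-lem4.1}: first sub-kltness of $(S,B_S)$, then the cohomological isomorphism, and finally normality of $Z$. For sub-kltness I would exploit the observation, already used in the proof of Lemma~\ref{g-lem4.1}, that sub plt together with snc forces $B_V^{=1}$ to be a \emph{disjoint} union of smooth prime divisors. Therefore no other component of $B_V^{=1}$ meets $S$, and adjunction gives $B_S=B_V^{<1}|_S$. This is an snc $\mathbb Q$-divisor on the smooth variety $S$ with every coefficient strictly less than $1$, so $(S,B_S)$ is automatically sub klt.

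For the isomorphism I would rerun the cohomological argument of Lemma~\ref{g-lem4.1} with $T$ taken to be the union of all irreducible components of $B_V^{=1}$ mapped into $Z=f(S)$. Lemma~\ref{g-lem4.1} itself forces $T=S$: any other component $S'$ with $f(S')\subseteq Z$ automatically satisfies $f(S')\subsetneq W$ and $f(S')\cap f(S)\ne\emptyset$, hence $S'=S$ by Lemma~\ref{g-lem4.1}. With $T=S$ the identical vanishing argument, based on the short exact sequence
\[
0\to\mathcal O_V(A-S)\to\mathcal O_V(A)\to\mathcal O_S(A|_S)\to 0,
\]
with $A=\lceil-(B_V^{<1})\rceil$, and on \cite[Theorem~6.3(i)]{fujino-fundamental} applied to $A-S-(K_V+\{B_V\}+B_V^{=1}-S)\sim_{\mathbb Q,f}0$, yields the isomorphism $\mathcal O_Z\cong f_*\mathcal O_S(A|_S)$. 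A direct snc bookkeeping check (each component $D_i$ of $B_V^{<1}$ with coefficient $b_i$ restricts to a reduced snc divisor $D_i|_S$ on $S$, and $B_S$ carries coefficient $b_i$ on each prime component of $D_i|_S$) identifies $A|_S$ with $\lceil -B_S\rceil$, producing the desired $\mathcal O_Z\cong g_*\mathcal O_S(\lceil -B_S\rceil)$.

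For the final assertion, since this last isomorphism factors through the inclusion $g_*\mathcal O_S\hookrightarrow g_*\mathcal O_S(\lceil -B_S\rceil)$ coming from the effectivity of $\lceil -B_S\rceil$, one recovers $\mathcal O_Z\cong g_*\mathcal O_S$, and normality of $Z$ then follows from Stein factorization of $g$: the Stein base is normal because $S$ is, and the finite birational map from it to $Z$ is forced to be an isomorphism by the identification $\mathcal O_Z\cong g_*\mathcal O_S$. The main obstacle, a minor one, is the verification that no lc stratum of $(V,\{B_V\}+B_V^{=1}-S)$ maps into $Z$, which is exactly what the maximal choice of $T$ together with Lemma~\ref{g-lem4.1} guarantee; no genuinely new geometric input beyond Lemma~\ref{g-lem4.1} and snc adjunction is needed.
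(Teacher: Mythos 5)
Your proposal is correct and follows essentially the same route as the paper: sub-kltness via snc adjunction using $B_S=B_V^{<1}|_S$, the short exact sequence for $A-S$ with the vanishing of the connecting map via \cite[Theorem 6.3 (i)]{fujino-fundamental} and Lemma \ref{g-lem4.1} (which forces $T=S$, i.e.\ no other lc stratum of $(V,\{B_V\}+B_V^{=1}-S)$ maps into $Z$), and normality of $Z$ from $\mathcal O_Z\cong g_*\mathcal O_S$. No substantive differences from the paper's argument.
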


\begin{proof}
We can easily check that $(S, B_S)$ is sub klt by adjuction. 
We consider the following short exact sequence 
$$
0\to \mathcal O_V(\lceil -(B^{<1}_V)\rceil -S)
\to \mathcal O_V(\lceil -(B^{<1}_V)\rceil) \to \mathcal O_S
(\lceil -B_S\rceil)\to 0. 
$$ 
Note that $B^{<1}_V|_S=B^{<1}_S=B_S$ holds. 
By Lemma \ref{g-lem4.1}, 
we know that no lc strata of $(V, \{B_V\}+B^{=1}_V-S)$ are mapped 
into $Z$ by $f$. 
By the same argument as in the proof of Lemma \ref{g-lem4.1}, 
we obtain that the natural map 
$$
\mathcal O_Z\to g_*\mathcal O_S(\lceil -B_S\rceil) 
$$ 
is an isomorphism. 
Therefore, the natural map $\mathcal O_Z\to g_*\mathcal O_S$ 
is an isomorphism. 
This implies that $Z$ is normal. 
\end{proof}

Lemma \ref{g-lem4.3} is well known to the experts. It is a 
slight refinement of 
the Kawamata--Shokurov basepoint-free theorem and 
is essentially due to Yujiro Kawamata (see \cite[Lemma 3]{kawamata}). 

\begin{lem}\label{g-lem4.3}
Let $(V, B_V)$ be a projective plt pair and let $D$ be a nef Cartier 
divisor on $V$. 
Assume that 
$aD-(K_V+B_V)$ is nef and big for some $a>0$ and 
that $\mathcal O_V(D)|_{\lfloor B_V\rfloor}$ is semi-ample. 
Then $D$ is semi-ample. 
\end{lem}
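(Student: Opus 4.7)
The plan is to reduce the lemma to the Kawamata--Shokurov basepoint-free theorem for klt pairs by lifting sections from $S := \lfloor B_V \rfloor$. Since $(V, B_V)$ is plt, $S$ is a disjoint union of normal prime divisors and the residual pair $(V, \{B_V\})$ is klt; this is what lets the vanishing step go through.

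The first and cleanest step is the lifting of sections. For each integer $m \geq a$, I would consider the short exact sequence
\[
0 \to \mathcal{O}_V(mD - S) \to \mathcal{O}_V(mD) \to \mathcal{O}_S(mD|_S) \to 0.
\]
Since
\[
(mD - S) - (K_V + \{B_V\}) \;=\; mD - (K_V + B_V) \;=\; (m - a)D + \bigl(aD - (K_V + B_V)\bigr)
\]
is nef and big for every $m \geq a$, Kawamata--Viehweg vanishing applied to the klt pair $(V, \{B_V\})$ yields $H^1(V, \mathcal{O}_V(mD - S)) = 0$, so the restriction map $H^0(V, \mathcal{O}_V(mD)) \twoheadrightarrow H^0(S, \mathcal{O}_S(mD|_S))$ is surjective. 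Combining this with the semi-ampleness of $\mathcal{O}_V(D)|_{\lfloor B_V\rfloor}$ and choosing $m$ sufficiently divisible so that $\mathcal{O}_S(mD|_S)$ is globally generated, the lifted sections prove that $\mathrm{Bs}(|mD|) \cap S = \emptyset$.

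The remaining task is to propagate basepoint-freeness from a neighborhood of $S$ to all of $V$, and I would do this by reducing to the klt Kawamata--Shokurov basepoint-free theorem via a perturbation of $B_V$. Using that $\mathrm{Bs}(|mD|)$ avoids $S$, I would take a Bertini-general member $D_0 \in |mD|$ whose support contains no component of $S$, together with a Kodaira-lemma decomposition $aD - (K_V + B_V) \sim_{\mathbb{Q}} A + E$ with $A$ ample and $E$ effective, chosen so that no component of $S$ appears in $E$. Combining these to replace $B_V$ by a suitable klt boundary $\Delta'$ would yield, for some $a' > 0$, a klt pair $(V, \Delta')$ with $a'D - (K_V + \Delta')$ ample; the classical klt basepoint-free theorem then gives the semi-ampleness of $D$. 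The delicate step is exactly this last perturbation: the naive choice $(V, \{B_V\})$ gives $aD - (K_V + \{B_V\}) = aD - (K_V + B_V) + S$, which is big but in general not nef, so the perturbation must simultaneously absorb enough of $S$ into the klt part while keeping the difference ample and the boundary klt. This is the main obstacle; once it is arranged, the argument closes by invoking the klt basepoint-free theorem.
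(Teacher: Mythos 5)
Your first step is essentially sound: for $S=\lfloor B_V\rfloor$ the identity $mD-S-(K_V+\{B_V\})=mD-(K_V+B_V)=(m-a)D+(aD-(K_V+B_V))$ together with vanishing for the klt pair $(V,\{B_V\})$ does lift sections and shows $\operatorname{Bs}|mD|\cap S=\emptyset$ for suitable $m$ (modulo the technical point that $S$ need not be $\mathbb Q$-Cartier on a general normal $V$, so one should really use the Nadel-type vanishing $H^1(V,\mathcal I_S\otimes \mathcal O_V(mD))=0$ for the plt pair). The genuine gap is your second step, and the route you sketch cannot be repaired. First, the Kodaira decomposition $aD-(K_V+B_V)\sim_{\mathbb Q}A+E$ with $A$ ample cannot in general be chosen with $E$ avoiding the components of $S$: if $S$ lies in the null locus (equivalently, the augmented base locus) of the nef and big divisor $aD-(K_V+B_V)$, then \emph{every} effective $E\sim_{\mathbb Q}aD-(K_V+B_V)-A$ contains $S$ in its support. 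Second, and more fundamentally, in that degenerate situation no klt perturbation exists at all: if $C\subset S$ is a curve with $D\cdot C=0$, $(aD-(K_V+B_V))\cdot C=0$, and $S\cdot C<0$, then for any boundary $\Delta'=\{B_V\}+(1-\epsilon)S+G$ with $\epsilon>0$ and $G\geq 0$ one computes $(a'D-(K_V+\Delta'))\cdot C\leq \epsilon\, S\cdot C<0$, so $a'D-(K_V+\Delta')$ is never nef, let alone ample. This degenerate case is precisely the content of the lemma; when $aD-(K_V+B_V)$ is positive along $S$ the statement is easy and the hypothesis on $D|_{\lfloor B_V\rfloor}$ is not even needed.

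The paper does not attempt a reduction to a klt pair. It observes that the non-klt locus of the plt pair $(V,B_V)$ is exactly $\lfloor B_V\rfloor$, on which $|mD|$ is free by hypothesis, and then invokes a basepoint-free theorem valid beyond the klt case, namely \cite[Corollary 4.5.6]{fujino-foundations} (going back to \cite[Lemma 3]{kawamata}): nefness of $D$, nefness and bigness of $aD-(K_V+B_V)$, and freeness on the non-klt locus together imply semi-ampleness. The underlying machinery is the X-method run with vanishing theorems for lc pairs (quasi-log schemes), not Kawamata--Viehweg for a perturbed klt boundary; that machinery is exactly what your argument is missing after Step 1. To make your proof self-contained you would need to carry out that lc-type basepoint-free argument, keeping $S$ in the boundary with coefficient one, rather than trying to push its coefficient below one.
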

\begin{proof}
By replacing $D$ with a multiple, 
we may assume that $\left|\mathcal O_V(mD)|_{\lfloor B_V\rfloor|}\right|$ 
is free for every nonnegative integer $m$. 
Since $(V, B_V)$ is plt, 
the non-klt locus of $(V, B_V)$ 
is $\lfloor B_V\rfloor$. 
Therefore, by \cite[Corollary 4.5.6]{fujino-foundations}, $D$ 
is semi-ample. 
\end{proof}

\section{On lc-trivial fibrations}\label{g-sec5}

In this section, we recall some results 
on klt-trivial fibrations in \cite{ambro-shokurov} and 
lc-trivial fibrations in \cite{fujino-gongyo} for the reader's convenience. 
We give only the definition which will be necessary 
to our purposes. 

\medskip 

Let $f:V\to W$ be a surjective morphism from a smooth projective 
variety $V$ onto a normal projective variety $W$. 
Let $B_V$ be a $\mathbb Q$-divisor on $V$ such that 
$(V, B_V)$ is sub lc and $\Supp B_V$ is a simple normal crossing divisor on $V$. 
Let $P$ be a prime divisor on $W$. 
By shrinking $W$ around the generic point of $P$, 
we assume that $P$ is Cartier. 
We set 
$$
b_P:=\max \left\{t \in \mathbb Q\, |\, 
 {\text{$(V, B_V+tf^*P)$ is sub lc over the generic point of $P$}} \right\}.  
$$ 
Then we put 
$$
B_W:=\sum _P (1-b_P)P, 
$$ 
where $P$ runs over prime divisors on $W$. 
It is easy to see that $B_W$ is a well-defined 
$\mathbb Q$-divisor on $W$ (see the proof of 
Lemma \ref{g-lem5.1} below). 
We call $B_W$ the discriminant $\mathbb Q$-divisor 
of $f:(V, B_V)\to W$. 
We assume that the natural map 
$$
\mathcal O_W\to f_*\mathcal O_V(\lceil -(B^{<1}_V)\rceil)
$$ 
is an isomorphism. 
In this situation, we have: 

\begin{lem}\label{g-lem5.1} 
$B_W$ is a boundary $\mathbb Q$-divisor on $W$. 
\end{lem}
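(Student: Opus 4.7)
The plan is to verify three claims, which together say that $B_W = \sum_P (1-b_P)P$ is a well-defined boundary $\mathbb Q$-divisor: (i) $b_P \geq 0$ for every prime divisor $P$; (ii) $b_P \leq 1$ for every prime divisor $P$; and (iii) $b_P = 1$ for all but finitely many $P$. Claim (i) is immediate, since the sub lc hypothesis on $(V, B_V)$ allows $t = 0$ in the definition of $b_P$. Claim (iii) follows from generic smoothness of $f$ together with the fact that $B_V$ has finitely many vertical components: outside the finitely many prime divisors making up the divisorial part of the $f$-image of the non-smooth locus of $f$ and of $\Supp B_V$'s vertical components, each component $E_i$ of $f^*P$ dominating $P$ has $w_i = 1$ in $f^*P = \sum_i w_i E_i$ and coefficient $c_i = 0$ in $B_V$, giving $b_P = 1$.

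The substantive step is (ii), where the hypothesis on the pushforward enters. I argue by contradiction: suppose $b_P > 1$, so there exists $t > 1$ with $(V, B_V + t f^*P)$ sub lc over the generic point $\eta_P$ of $P$. Sub lc at the generic point of each $E_i$ with $f(E_i) = P$ forces $c_i + t w_i \leq 1$; together with $t > 1$ and $w_i \geq 1$, this gives $c_i < 1 - w_i \leq 0$, hence $-c_i > w_i - 1$, which (since $w_i$ is a positive integer) yields $\lceil -c_i \rceil \geq w_i$. Shrink $W$ to an open neighborhood $U$ of $\eta_P$ such that on $f^{-1}(U)$ the only components of $f^*P$, and the only vertical components of $\Supp B^{<1}_V$, are those dominating $P$. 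Then one has the coefficient-wise inequality $\lceil -(B^{<1}_V) \rceil \geq f^*P$ on $f^{-1}(U)$: along a vertical $E_i$ this is the just-derived $\lceil -c_i \rceil \geq w_i$, and along any horizontal component $D \subseteq \Supp B^{<1}_V$ the coefficient of $f^*P$ is $0$ while $\lceil -c_D \rceil \geq 0$ (since $c_D < 1$). Taking a local uniformizer $\pi$ for $P$ at $\eta_P$, the relation $\mathrm{div}(1/\pi) = -f^*P$ on $f^{-1}(U)$ shows that $1/\pi$ defines a section of $\mathcal{O}_V(\lceil -(B^{<1}_V) \rceil)$ over $f^{-1}(U)$. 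Pushing down gives $1/\pi \in H^0(U, f_* \mathcal{O}_V(\lceil -(B^{<1}_V)\rceil)) = H^0(U, \mathcal{O}_W)$, contradicting the fact that $1/\pi$ has a pole along $P$.

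The main obstacle is (ii): the key manipulations are converting the discrepancy inequality $c_i + t w_i \leq 1$ (for $t > 1$) into the integer-ceiling estimate $\lceil -c_i \rceil \geq w_i$, and then recognizing $1/\pi$ as a section of the pushforward that the hypothesis forbids. Parts (i) and (iii) are routine.
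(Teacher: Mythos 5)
Your proposal is correct and follows essentially the same route as the paper: $b_P\geq 0$ from the sub lc hypothesis, $b_P=1$ generically, and $b_P\leq 1$ by deriving a contradiction with the isomorphism $\mathcal O_W\overset{\sim}{\to} f_*\mathcal O_V(\lceil -(B^{<1}_V)\rceil)$. Your step (ii) is just a fully detailed version of the paper's one-line assertion that $b_P>1$ would force this map to factor through $\mathcal O_W(P)$ near the generic point of $P$ (via the coefficient estimate $\lceil -c_i\rceil\geq w_i$ and the section $1/\pi$).
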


We give a detailed proof of Lemma \ref{g-lem5.1} for the 
reader's convenience. 

\begin{proof}[Proof of Lemma \ref{g-lem5.1}]
We can easily see that there exists a nonempty Zariski open set 
$U$ of $W$ such that $b_P=1$ holds for every prime divisor 
$P$ on $W$ with $P\cap U\ne \emptyset$. 
Therefore, $B_W$ is a well-defined $\mathbb Q$-divisor on $W$. 
Since $(V, B_V)$ is sub lc, 
$b_P\geq 0$ holds for every 
prime divisor $P$ on $W$. 
Thus, we have $B_W=B^{\leq 1}_W$ by definition. 
If $b_P>1$ holds for some prime divisor $P$ on $W$, then we see that 
the natural map $\mathcal O_W\to f_*\mathcal O_V(\lceil -(B^{<1}_V)\rceil)$ 
factors through $\mathcal O_W(P)$ in a neighborhood of 
the generic point of $P$. 
This is a contradiction. 
Therefore, $b_P\leq 1$ always holds for 
every prime divisor $P$ on $W$. 
This means that $B_W$ is effective. 
Hence we see that $B_W$ is a boundary $\mathbb Q$-divisor 
on $W$.  
\end{proof}

We further assume that 
$K_V+B_V\sim _{\mathbb Q} f^*D$ for some $\mathbb Q$-Cartier 
$\mathbb Q$-divisor $D$ on $W$. 
We set $$M_W:=D-K_W-B_W, $$where 
$K_W$ is the canonical divisor of $W$. 
We call $M_W$ the moduli $\mathbb Q$-divisor 
of $K_V+B_V\sim _{\mathbb Q} f^*D$. 
Then we have: 

\begin{thm}\label{g-thm5.2}
There exist a birational morphism $p:W'\to W$ from a smooth projective 
variety $W'$ and a nef $\mathbb Q$-divisor 
$M_{W'}$ on $W'$ such that $p_*M_{W'}=M_W$. 
\end{thm}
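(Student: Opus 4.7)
The plan is to construct the birational model $p \colon W' \to W$ by simultaneously resolving the base and the total space, then to extract the moduli part of the induced lc-trivial fibration on the new model and to argue that this extracted divisor is nef by appealing (through a covering trick) to the klt-trivial case of Ambro--Shokurov.

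First I would take a log resolution $p \colon W' \to W$ of $(W, B_W)$ and then resolve the main component of the fiber product $V \times_W W'$ to obtain a smooth projective variety $V'$ fitting in a commutative square with a surjective morphism $f' \colon V' \to W'$ and a birational morphism $q \colon V' \to V$. By further blowing up $V'$ if necessary, I may assume that $\mathrm{Supp}(q^{*}B_V)$ together with the exceptional divisors of $q$ and with the inverse image of $p^{-1}(\mathrm{Supp}(B_W))$ is contained in a simple normal crossing divisor on $V'$. I then define $B_{V'}$ on $V'$ by the crepant pullback formula
\[
K_{V'}+B_{V'} = q^{*}(K_V+B_V),
\]
so that $(V', B_{V'})$ is sub plt with simple normal crossing support and $K_{V'}+B_{V'} \sim_{\mathbb Q} f'^{*}(p^{*}D)$. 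The hypothesis $\mathcal O_W \to f_*\mathcal O_V(\lceil -(B_V^{<1})\rceil) \cong \mathcal O_W$ is preserved under such crepant pullback (one checks that the $q$-exceptional divisors appearing in $\lceil -(B_{V'}^{<1})\rceil$ have only effective pieces over $W'$), so the setup of Section~\ref{g-sec5} applies to $f' \colon (V', B_{V'}) \to W'$.

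Next I would define $B_{W'}$ and $M_{W'}$ as the discriminant and moduli $\mathbb Q$-divisors associated to $f' \colon (V', B_{V'}) \to W'$, so that $K_{W'}+B_{W'}+M_{W'} \sim_{\mathbb Q} p^{*}D$. Since taking pushforward of the canonical bundle formula one recovers the original discriminant ($p_*B_{W'} = B_W$ by an elementary valuation computation, as the coefficient of a prime divisor on $W$ is determined generically along it, where $p$ is an isomorphism), the equality $p_*M_{W'} = M_W$ follows from $p_*K_{W'} = K_W$ and $p_*(p^{*}D) = D$. Thus the content of the theorem reduces to showing that $M_{W'}$ is nef.

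The main obstacle is exactly this nefness statement; the rest of the argument is essentially bookkeeping. To handle it, I would reduce to the klt-trivial case. By Kawamata's covering trick (or the analogous construction in \cite{fujino-gongyo}), there exists a finite surjective morphism $\tau \colon \widetilde V' \to V'$ from a smooth projective variety, together with a morphism $\widetilde V' \to \widetilde W'$ over a finite cover $\widetilde W' \to W'$, such that the pulled-back pair becomes sub klt (absorbing $B_{V'}^{=1}$ into the ramification). The moduli divisor of the resulting klt-trivial fibration pulls back the one of $f'$ up to a positive rational multiple. By the Ambro--Shokurov theorem on klt-trivial fibrations (Kawamata's positivity theorem, built on the Fujita--Kawamata semipositivity of direct images coming from variation of Hodge structures), the moduli divisor on the klt-trivial side is nef after a further birational modification of the base. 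Descending along the finite Galois cover $\widetilde W' \to W'$ and applying Lemma \ref{g-lem2.1} to compare with $M_{W'}$ on $W'$ (replacing $W'$ by a higher resolution if necessary, which is harmless since we only claim existence of some birational model), we obtain that $M_{W'}$ is nef, which completes the proof.
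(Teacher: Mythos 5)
The paper does not actually prove this statement: Theorem \ref{g-thm5.2} is quoted as a special case of \cite[Theorem 3.6]{fujino-gongyo}, which generalizes \cite[Theorem 2.7]{ambro-shokurov}, so any comparison is really with the proof in those references. The bookkeeping half of your proposal is fine: taking a log resolution $p\colon W'\to W$, resolving the main component of $V\times_W W'$, defining $B_{V'}$ by crepant pullback, checking that the rank-one hypothesis persists, and verifying $p_*B_{W'}=B_W$ (hence $p_*M_{W'}=M_W$) by computing coefficients over the locus where $p$ is an isomorphism --- all of this is the standard b-divisor formalism and is correct.

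The genuine gap is in the nefness step. You claim that a Kawamata covering trick produces a finite cover on which the pulled-back pair becomes sub klt by ``absorbing $B_{V'}^{=1}$ into the ramification.'' This is false: if $\tau$ ramifies to order $m$ along a divisor lying over a component $S$ of $B_{V'}$ with coefficient $d$, the log pullback has coefficient $1-m(1-d)$ there, which equals $1$ whenever $d=1$, for every $m$. Coefficient-one components can never be pushed below one by branched covers, so the reduction to the klt-trivial case fails precisely when $B_{V'}^{=1}$ has components horizontal over $W'$ --- and that is the only case not already covered by Ambro's theorem, since an lc-trivial fibration whose coefficient-one part is vertical is already klt-trivial in Ambro's sense (sub klt over the generic point of the base suffices for \cite[Theorem 2.7]{ambro-shokurov}). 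Handling a horizontal $B_{V'}^{=1}$ is exactly the new difficulty resolved in \cite{fujino-gongyo}, where the nefness rests on semipositivity theorems for variations of \emph{mixed} Hodge structure (and related arguments), not on the pure Fujita--Kawamata semipositivity that underlies the klt case. As written, your argument proves the theorem only under the additional hypothesis that $(V,B_V)$ is sub klt over the generic point of $W$; for the statement actually needed (sub lc, and in the application sub plt with possibly horizontal $\lfloor B_V\rfloor$), you must either invoke \cite[Theorem 3.6]{fujino-gongyo} as the paper does or supply the mixed-Hodge-theoretic input.
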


Theorem \ref{g-thm5.2} is a special case of 
\cite[Theorem 3.6]{fujino-gongyo}, which is a generalization of 
\cite[Theorem 2.7]{ambro-shokurov}. When $W$ is a curve, we have: 

\begin{thm}[{\cite[Theorem 0.1]{ambro-shokurov}}]\label{g-thm5.3}
If $\dim W=1$ and $(V, B_V)$ is sub klt, 
then $M_W$ is semi-ample.  
\end{thm}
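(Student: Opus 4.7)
The plan is to split the argument by the degree of $M_W$. Since $W$ is a smooth projective curve, any birational morphism $p\colon W'\to W$ from a smooth projective variety $W'$ is automatically an isomorphism, so Theorem \ref{g-thm5.2} applied in this setting already gives that $M_W$ itself is a nef $\mathbb Q$-divisor on $W$, i.e. $\deg M_W\geq 0$. If $\deg M_W>0$, then $M_W$ is ample on the curve and hence semi-ample, and there is nothing more to do.

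The delicate case is $\deg M_W=0$, where semi-ampleness is equivalent to $M_W$ being a torsion class in $\mathrm{Pic}(W)\otimes \mathbb Q$. To handle this case I would follow the Hodge-theoretic strategy of \cite{ambro-shokurov}. Using Kawamata's covering trick, I would perform a finite base change $\pi\colon W''\to W$ with $W''$ a smooth projective curve, replace $V$ by a log resolution of the normalization of $V\times_W W''$, and arrange by semi-stable reduction that the variation of Hodge structure on the middle cohomology of a general fiber has unipotent local monodromy around each singular point. Standard discriminant calculations then give $\pi^*M_W\sim_{\mathbb Q} M_{W''}$, and in the unipotent setting $M_{W''}$ is a positive rational multiple of the first Chern class of a Deligne canonical extension of the bottom piece of the Hodge filtration on the underlying VHS associated to $f''\colon V''\to W''$.

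The main obstacle is then to show that this Hodge line bundle, which is nef of degree zero by Fujita--Kawamata semi-positivity, must in fact be torsion. This is the content of Koll\'ar's finiteness theorem for polarized variations of Hodge structure over a curve: a VHS whose Hodge bundle has degree zero has finite monodromy after a further finite \'etale cover, so the associated determinant line bundle is torsion in $\mathrm{Pic}(W'')\otimes \mathbb Q$. Pushing this information forward along $\pi$ and dividing by $\deg\pi$ transfers the torsion statement from $W''$ back to $W$, showing that $M_W$ itself is torsion in $\mathrm{Pic}(W)\otimes\mathbb Q$. Combined with the positive degree case above, this yields semi-ampleness of $M_W$ in all cases. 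The substantive content—and what I expect to be the hard part—lies entirely in the Hodge-theoretic input (Fujita--Kawamata semi-positivity together with Koll\'ar's finiteness), not in the formal reduction.
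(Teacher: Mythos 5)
The paper does not actually prove this statement: Theorem \ref{g-thm5.3} is imported verbatim from \cite[Theorem 0.1]{ambro-shokurov}, so there is no in-paper argument to compare yours against. What you have written is a compressed outline of Ambro's own Hodge-theoretic proof, and as an outline it is essentially on target: the dichotomy $\deg M_W>0$ (ample) versus $\deg M_W=0$ (reduce to showing the class of $M_W$ is torsion in $\operatorname{Pic}(W)$, i.e.\ zero in $\operatorname{Pic}(W)\otimes\mathbb Q$ --- note ``torsion in $\operatorname{Pic}(W)\otimes\mathbb Q$'' is a slip, since tensoring with $\mathbb Q$ kills torsion), the semistable/unipotent reduction after a finite base change $W''\to W$, the base-change compatibility $\pi^*M_W\sim_{\mathbb Q}M_{W''}$, and the descent of the torsion statement via $\pi_*\pi^*M_W=(\deg\pi)M_W$ are all correct and are the actual skeleton of Ambro's argument.

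Two points deserve sharpening before this could be called a proof sketch of the right theorem. First, the relevant variation of Hodge structure is not the middle cohomology of the fibers of $f$ itself: since $B_V$ is a genuinely fractional (and possibly non-effective) $\mathbb Q$-divisor, one must pass to the cyclic cover of $V$ determined by the relation $K_V+B_V\sim_{\mathbb Q}f^*D$ and take the appropriate eigenspace of the top-degree cohomology of the covering family; it is this eigen-VHS whose bottom Hodge piece computes $M_{W''}$, and the sub klt hypothesis is exactly what guarantees this piece is a line and that the Fujita--Kawamata semipositivity applies to it. Second, the degree-zero step does not rest on a ``Koll\'ar finiteness theorem'' so much as on the curvature formula for the Hodge metric: $\deg=0$ forces the second fundamental form to vanish, so the bottom piece is a flat unitary rank-one subsystem of a $\mathbb Q$-VHS, whose monodromy eigenvalues are algebraic numbers with all conjugates of absolute value one and hence roots of unity by Kronecker; finiteness of the monodromy then gives torsion. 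With those two corrections your proposal matches the proof in \cite{ambro-shokurov}; the substantive content is indeed entirely in that Hodge-theoretic input, which is why the present paper simply cites it.
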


As an easy consequence of Theorem \ref{g-thm5.3}, we have: 

\begin{cor}\label{g-cor5.4}
If $\dim W=1$, $(V, B_V)$ is sub klt, and 
$D$ is nef, 
then $D$ is semi-ample. 
\end{cor}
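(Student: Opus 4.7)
The plan is to deduce Corollary \ref{g-cor5.4} from Theorem \ref{g-thm5.3} together with the trivial fact that the semi-ampleness question on a smooth projective curve reduces to the degree computation, plus case analysis on the genus of $W$.

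First, since $\dim W=1$ and $W$ is normal, $W$ is a smooth projective curve. By Theorem \ref{g-thm5.3}, $M_W$ is semi-ample; in particular $\deg M_W\geq 0$. By Lemma \ref{g-lem5.1}, $B_W$ is an effective $\mathbb Q$-divisor, so $\deg B_W\geq 0$. By definition of $M_W$, we have the decomposition
$$
D\sim_{\mathbb Q} K_W+B_W+M_W.
$$

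Next, I would split into cases based on $\deg D$. If $\deg D>0$, then some positive integer multiple of $D$ is a Cartier divisor of positive degree on a smooth projective curve, hence ample, hence semi-ample; so $D$ is semi-ample. Thus I may assume $\deg D=0$. Combining with the above decomposition, and using $\deg B_W\geq 0$ and $\deg M_W\geq 0$, I obtain $\deg K_W\leq 0$, so $g(W)\in\{0,1\}$.

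If $g(W)=0$, then $W\cong \mathbb P^1$ and any $\mathbb Q$-divisor of degree zero on $\mathbb P^1$ is $\mathbb Q$-linearly equivalent to zero, so $D$ is semi-ample. If $g(W)=1$, then $\deg K_W=0$ and $K_W\sim 0$, which together with $\deg D=0$ forces $\deg B_W=\deg M_W=0$. Since $B_W$ is effective of degree zero on a curve, $B_W=0$. Since $M_W$ is semi-ample of degree zero, some positive multiple $nM_W$ is base-point-free of degree zero, hence $nM_W\sim 0$; so $M_W\sim_{\mathbb Q}0$. Then $D\sim_{\mathbb Q} K_W+M_W\sim_{\mathbb Q} 0$ is semi-ample.

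I do not anticipate a real obstacle here: the only nontrivial input is Theorem \ref{g-thm5.3} to secure semi-ampleness (and hence nefness and nonnegative degree) of $M_W$; the rest is elementary curve geometry. The mild point to be careful about is the genus-one subcase, where one must use semi-ampleness of $M_W$ (not merely nefness) to conclude $\mathbb Q$-triviality from vanishing degree.
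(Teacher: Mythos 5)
Your proof is correct and follows essentially the same route as the paper: split on $\deg D$, use the decomposition $D=K_W+B_W+M_W$ together with effectivity of $B_W$ (Lemma \ref{g-lem5.1}) and Theorem \ref{g-thm5.3} to restrict to $W\cong\mathbb P^1$ or an elliptic curve, and conclude in each case. The only cosmetic difference is that in the genus-one case you push on to $M_W\sim_{\mathbb Q}0$, whereas the paper stops at $D\sim M_W$ and quotes semi-ampleness of $M_W$ directly; both are fine.
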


\begin{proof}
If $\deg D>0$, 
then $D$ is ample. 
In particular, $D$ is semi-ample. 
From now on, we assume that $D$ is numerically trivial. 
By definition, $D=K_W+B_W+M_W$. 
Since $B_W$ is effective by Lemma \ref{g-lem5.1} and 
$M_W$ is nef by Theorem \ref{g-thm5.2}, 
$W$ is $\mathbb P^1$ or an elliptic curve. 
If $W=\mathbb P^1$, then $D\sim _{\mathbb Q}0$. 
Of course, $D$ is semi-ample. 
If $W$ is an elliptic curve, 
then $D\sim M_W$, that is, $D$ is linearly equivalent to $M_W$. 
In this case, $D$ is semi-ample by Theorem \ref{g-thm5.3}. 
Anyway, $D$ is always semi-ample. 
\end{proof}

Corollary \ref{g-cor5.5} is a key ingredient of the proof of 
the main theorem:~Theorem \ref{g-thm1.2}. 

\begin{cor}\label{g-cor5.5} 
If $\dim W=2$, $(V, B_V)$ is sub plt, $(W, B_W)$ is 
plt, 
and $D$ is nef and big, then 
$D$ is semi-ample. 
\end{cor}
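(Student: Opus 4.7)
The plan is to apply Lemma \ref{g-lem4.3} to $(W, B_W)$ and a sufficiently divisible multiple of $D$ (so that it becomes Cartier). First I would verify that the moduli part $M_W = D - K_W - B_W$ is nef on $W$. Since $(W, B_W)$ is plt, $K_W + B_W$ is $\mathbb Q$-Cartier, so $M_W$ is $\mathbb Q$-Cartier. By Theorem \ref{g-thm5.2}, there is a birational morphism $p \colon W' \to W$ from a smooth projective surface and a nef $\mathbb Q$-divisor $M_{W'}$ on $W'$ with $p_* M_{W'} = M_W$. Then Lemma \ref{g-lem2.1} gives that $M_W$ itself is nef. Consequently $2D - (K_W + B_W) = D + M_W$ is nef and big, so the positivity hypothesis of Lemma \ref{g-lem4.3} is satisfied; the remaining hypothesis is the semi-ampleness of $\mathcal O_W(D)|_{\lfloor B_W \rfloor}$.

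Because $(W, B_W)$ is plt on a surface, $\lfloor B_W \rfloor = B_W^{=1}$ is a disjoint union of smooth curves, so it suffices to check semi-ampleness of $D|_S$ for each component $S$. Since $S$ has coefficient $1$ in $B_W$, the discriminant threshold $b_S$ equals $0$, which forces the existence of an irreducible component $S_1$ of $B_V^{=1}$ with $f(S_1) = S$. I would then apply Corollary \ref{g-cor4.2} to $S_1$: the pair $(S_1, B_{S_1})$ obtained by adjunction $K_{S_1} + B_{S_1} = (K_V + B_V)|_{S_1}$ is sub klt, the natural map $\mathcal O_S \to g_* \mathcal O_{S_1}(\lceil -B_{S_1} \rceil)$ is an isomorphism where $g := f|_{S_1}\colon S_1 \to S$, and $S_1$ is smooth with $\Supp B_{S_1}$ simple normal crossing. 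From $K_V + B_V \sim_{\mathbb Q, f} 0$ we also get $K_{S_1} + B_{S_1} \sim_{\mathbb Q} g^*(D|_S)$.

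Now $g \colon (S_1, B_{S_1}) \to S$ fits the setup of Section \ref{g-sec5} with $\dim S = 1$ and $(S_1, B_{S_1})$ sub klt, and $D|_S$ is nef as the restriction of a nef divisor. Hence Corollary \ref{g-cor5.4} applies and shows that $D|_S$ is semi-ample. Doing this for every component of $\lfloor B_W \rfloor$ gives that $\mathcal O_W(D)|_{\lfloor B_W \rfloor}$ is semi-ample, and Lemma \ref{g-lem4.3} then yields that $D$ is semi-ample. The main technical point is propagating the lc-trivial fibration structure from $f$ to the restriction $g$, which is precisely what Corollary \ref{g-cor4.2} supplies; once the restricted fibration is identified as a sub klt lc-trivial fibration over a curve, the corollary follows by combining Corollary \ref{g-cor5.4} with the refined basepoint-free theorem.
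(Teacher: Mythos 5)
Your proposal is correct and follows essentially the same route as the paper's proof: restrict to each component of $\lfloor B_W\rfloor$ via a component of $B_V^{=1}$ lying over it, use Corollary \ref{g-cor4.2} to get a sub klt lc-trivial fibration over a curve and Corollary \ref{g-cor5.4} for semi-ampleness there, establish nefness of $M_W$ by Theorem \ref{g-thm5.2} and Lemma \ref{g-lem2.1} so that $2D-(K_W+B_W)=D+M_W$ is nef and big, and conclude with Lemma \ref{g-lem4.3}. The only cosmetic difference is that you derive the existence of the component $S_1$ with $f(S_1)=S$ directly from $b_S=0$ and sub-pltness, where the paper also cites Lemma \ref{g-lem4.1}; both justifications are valid.
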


\begin{proof}
Let $Z$ be an irreducible component of $\lfloor B_W\rfloor$. 
Then, by the definition of $B_W$ and Lemma 
\ref{g-lem4.1}, 
there exists an irreducible component $S$ of $B^{=1}_V$ such that 
$f(S)=Z$. 
Therefore, by Corollary \ref{g-cor4.2}, the natural map 
$\mathcal O_Z\to g_*\mathcal O_S(\lceil -B_S\rceil)$ is an isomorphism, 
where $K_S+B_S=(K_V+B_V)|_S$ and 
$g=f|_S$. 
Note that $(S, B_S)$ is sub klt and that 
$K_S+B_S\sim _{\mathbb Q} g^*(D|_Z)$. 
Thus, $D|_Z$ is semi-ample by Corollary \ref{g-cor5.4}. 
On the other hand, 
by Theorem \ref{g-thm5.2}, 
$M_W$ is nef since $M_W=D-(K_W+B_W)$ is 
$\mathbb Q$-Cartier and $W$ is a normal projective 
surface (see Lemma \ref{g-lem2.1}). 
Therefore, $2D-(K_W+B_W)=D+M_W$ is nef 
and big. 
Thus we obtain that $D$ is semi-ample by Lemma \ref{g-lem4.3}. 
\end{proof}

We close this section with a short remark on 
recent preprints \cite{fujino-slc-trivial} and 
\cite{fujino-fujisawa-liu}. 

\begin{rem}\label{g-rem5.6}
In \cite{fujino-slc-trivial}, the first author introduced the notion 
of basic slc-trivial fibrations, which is a generalization of 
that of lc-trivial fibrations, and got a much more general 
result than Theorem \ref{g-thm5.2} (see 
\cite[Theorem 1.2]{fujino-slc-trivial}). 
In \cite{fujino-fujisawa-liu}, we established the 
semi-ampleness of $M_W$ for basic slc-trivial fibrations 
when the base space $W$ is a curve (see \cite[Corollary 1.4]
{fujino-fujisawa-liu}). We strongly recommend the interested reader to 
see \cite{fujino-slc-trivial} and \cite{fujino-fujisawa-liu}. 
\end{rem}

\section{Minimal model program for surfaces}\label{g-sec6}

In this section, we quickly see a special case of the minimal model 
program for projective plt surfaces. 

We can easily check the following lemma by the usual 
minimal model program for surfaces. 
We recommend the interested reader to 
see \cite{fujino-surfaces} for the general theory 
of log surfaces. 

\begin{lem}\label{g-lem6.1} 
Let $(X, B)$ be a projective 
plt surface such that 
$B$ is a $\mathbb Q$-divisor and let $M$ be a nef $\mathbb Q$-divisor 
on $X$. 
Assume that $K_X+B+M$ is big. 
Then we can run the minimal model program with respect 
to $K_X+B+M$ and 
get a sequence of extremal contraction 
morphisms 
$$
(X, B+M)=:(X_0, B_0+M_0)\overset {\varphi_0}\longrightarrow \cdots 
\overset{\varphi_{k-1}}\longrightarrow (X_k, B_k+M_k)=:
(X^*, B^*+M^*)
$$ 
with the following properties: 
\begin{itemize}
\item[(i)] each $\varphi_i$ is a $(K_{X_i}+B_i+M_i)$-negative 
extremal birational contraction morphism, 
\item[(ii)] $K_{X_{i+1}}=\varphi_{i*}K_{X_i}$, 
$B_{i+1}=\varphi_{i*}B_i$, 
and $M_{i+1}=\varphi_{i*}M_i$ for every $i$, 
\item[(iii)] $M_i$ is nef for every $i$, and 
\item[(iv)] $K_{X^*}+B^*+M^*$ is nef 
and big. 
\end{itemize}
\end{lem}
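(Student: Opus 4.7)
The plan is to run the log minimal model program in the usual way, but driven by the divisor $K_X+B+M$ rather than $K_X+B$. The key link is the nefness of $M$: any curve $C\subset X$ with $(K_X+B+M)\cdot C<0$ automatically satisfies $(K_X+B)\cdot C<0$, since $M\cdot C\geq 0$. Thus the $(K_X+B+M)$-negative part of $\overline{NE}(X)$ sits inside the $(K_X+B)$-negative part, where the cone theorem for projective plt surface pairs (see \cite{fujino-surfaces}) produces countably many extremal rays, each generated by a rational curve and each contractible.

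I would then proceed inductively. If $K_X+B+M$ is already nef, we are done with $k=0$ (property (iv) is exactly nefness together with the hypothesized bigness). Otherwise, choose a $(K_X+B+M)$-negative extremal ray $R_0\subset\overline{NE}(X)$ and let $\varphi_0:X\to X_1$ be its extremal contraction. I claim $\varphi_0$ must be birational: a Mori-type fibration would cover $X$ by a family of curves $F$ numerically equivalent to a fiber with $(K_X+B+M)\cdot F<0$, but bigness gives $K_X+B+M\equiv A+E$ with $A$ ample and $E$ effective, forcing $(K_X+B+M)\cdot F\geq A\cdot F>0$ for a general such $F$, a contradiction. Hence $\varphi_0$ is a divisorial contraction of a single irreducible curve to a point.

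Next, push forward: set $B_1:=\varphi_{0*}B$ and $M_1:=\varphi_{0*}M$. By the standard theory of extremal divisorial contractions for plt surface pairs, $(X_1,B_1)$ is again projective plt with $B_1$ a $\mathbb Q$-divisor, $K_{X_1}+B_1$ is $\mathbb Q$-Cartier, and in fact $X_1$ is $\mathbb Q$-factorial so that $M_1$ is $\mathbb Q$-Cartier. Lemma \ref{g-lem2.1} then yields nefness of $M_1$, which is property (iii), while properties (i) and (ii) hold by construction. Bigness of $K_{X_1}+B_1+M_1=\varphi_{0*}(K_X+B+M)$ is preserved under birational pushforward (sections pull back injectively), so the setup is reproduced on $(X_1,B_1+M_1)$ and the induction continues.

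Termination is automatic, since each $\varphi_i$ strictly drops the Picard number of a projective surface; after finitely many steps $K_{X^*}+B^*+M^*$ is nef, and still big by the same pushforward argument, yielding (iv). The main technical point, and the only nonroutine one, is checking at each step that $(X_{i+1},B_{i+1})$ remains plt and that $M_{i+1}$ remains $\mathbb Q$-Cartier so that Lemma \ref{g-lem2.1} applies; once these standard features of the plt surface MMP are in hand, the rest is a mechanical induction.
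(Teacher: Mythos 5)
Your proof is correct, and its overall skeleton (contract one ray at a time, observe each step is $(K_{X_i}+B_i)$-negative so pltness survives by the negativity lemma, invoke Lemma \ref{g-lem2.1} for nefness of $M_{i+1}$, terminate) matches the paper's. The one genuine difference is how the extremal contraction is produced. You argue directly on the cone: since $M$ is nef, $K_X+B+M$ is nonnegative on $\overline{NE}(X)_{K_X+B\geq 0}$, so any failure of nefness of $K_X+B+M$ is detected by a $(K_X+B)$-negative extremal ray, and the cone and contraction theorem for the plt pair $(X,B)$ already supplies the contraction. The paper instead uses the perturbation trick: it chooses a small ample $H_i$ and an effective $\Delta_i$ with $K_{X_i}+\Delta_i\sim_{\mathbb Q}K_{X_i}+B_i+M_i+H_i$, $(X_i,\Delta_i)$ plt and $K_{X_i}+\Delta_i$ not nef, runs the ordinary plt MMP for $(X_i,\Delta_i)$, and then uses ampleness of $H_i$ and nefness of $M_i$ to see that the contracted ray is negative for both $K_{X_i}+B_i+M_i$ and $K_{X_i}+B_i$. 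Your route is a bit more elementary and self-contained on surfaces; the paper's perturbation is the device that scales to higher dimensions and to generalized pairs, where one cannot so easily locate $(K+B+M)$-negative extremal rays inside the $(K+B)$-negative part of the cone without first absorbing the nef part into the boundary. Your termination via the drop in Picard number is also cleaner than the paper's appeal to bigness, which is really only needed to rule out Mori fiber space contractions (as you do) and to obtain property (iv).
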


\begin{proof}
If $K_{X_i}+B_i+M_i$ is not nef, then we can take 
an ample $\mathbb Q$-divisor $H_i$ and an effective 
$\mathbb Q$-divisor $\Delta_i$ on $X_i$ such that 
$K_{X_i}+B_i+M_i+H_i\sim _{\mathbb Q} K_{X_i}+\Delta_i$, 
$(X_i, \Delta_i)$ is plt, and $K_{X_i}+\Delta_i$ 
is not nef. 
By the cone and contraction 
theorem, 
we can construct a $(K_{X_i}+\Delta_i)$-negative 
extremal birational contraction morphism 
$\varphi_i: X_i\to X_{i+1}$. 
Since $H_i$ is ample, 
$\varphi_i$ is a $(K_{X_i}+B_i+M_i)$-negative 
extremal contraction morphism. 
Moreover, 
since $M_i$ is nef, 
$\varphi_i$ is $(K_{X_i}+B_i)$-negative. 
Therefore, $(X_{i+1}, B_{i+1})$ is plt by the negativity lemma. 
In particular, 
$X_{i+1}$ is $\mathbb Q$-factorial. 
By Lemma \ref{g-lem2.1}, 
we obtain that $M_{i+1}=\varphi_{i*}M_i$ is nef. 
Since $K_X+B+M$ is big by assumption, 
this minimal model program 
terminates. 
Then we finally get a model $(X^*, B^*+M^*)$ such that 
$K_{X^*}+B^*+M^*$ is nef and big. 
\end{proof}

If we put $\varphi:=\varphi_{k-1}\circ \cdots \circ \varphi_0: X\to 
X^*$, then we have 
$$
K_X+B+M=\varphi^*(K_{X^*}+B^*+M^*)+E
$$ 
for some effective $\varphi$-exceptional $\mathbb Q$-divisor 
$E$ on $X$ by the negativity lemma. 

\medskip 

We will use Lemma \ref{g-lem6.1} in the proof of 
the main theorem:~Theorem \ref{g-thm1.2}. 

\section{Proof of the main theorem:~Theorem \ref{g-thm1.2}}

In this section, let us prove the main theorem:~Theorem \ref{g-thm1.2}. 

\medskip 

Let $(X, \Delta)$ be a projective 
plt (resp.~lc) pair such that 
$\Delta$ is a $\mathbb Q$-divisor. 
Assume that $0<\kappa (X, K_X+\Delta)<\dim X$. 
Then we consider the Iitaka fibration 
$$f:=\Phi_{|m_0(K_X+\Delta)|}: X\dashrightarrow Y$$
where $m_0$ is a sufficiently large and divisible positive integer. 
By taking a suitable birational modification of $f: X\dashrightarrow Y$, we get 
a 
commutative diagram: 
$$
\xymatrix{
X \ar@{-->}[d]_-f& X'\ar[l]_-g\ar[d]^-{f'} \\ 
Y & \ar[l]^-hY'
}
$$ 
which satisfies the following properties: 
\begin{itemize}
\item[(i)] $X'$ and $Y'$ are smooth projective varieties, 
\item[(ii)] $g$ and $h$ are birational morphisms, and 
\item[(iii)] $K_{X'}+\Delta'=g^*(K_X+\Delta)$ such that 
$\Supp \Delta'$ is a simple normal crossing divisor on $X'$. 
\end{itemize}
We note that 
$(X', (\Delta')^{>0})$ is plt (resp.~lc) and that 
$$
H^0(X, \mathcal O_X(\lfloor m(K_X+\Delta)\rfloor))
\simeq H^0(X', \mathcal O_{X'}(\lfloor m(K_{X'}+(\Delta')^{>0})\rfloor))
$$
holds for every nonnegative integer $m$. 
Therefore, for the proof of the finite generation of 
the log canonical ring $R(X, \Delta)$, we may replace $(X, \Delta)$ with 
$(X', (\Delta')^{>0})$ and 
assume that the Iitaka fibration $f:X\dashrightarrow Y$ with respect to $K_X+\Delta$ 
is a morphism between smooth projective varieties. 
By construction, 
$\dim Y=\kappa (X, K_X+\Delta)$ and 
$\kappa (X_{\overline \eta}, K_{X_{\overline \eta}}+\Delta|_{X_{\overline \eta}})=0$, 
where $X_{\overline \eta}$ is the geometric generic fiber of $f:X\to Y$. 

By \cite[Theorem 2.1, Proposition 4.4, and 
Remark 4.5]{abramovich-karu}, 
we can construct a commutative diagram 
$$
\xymatrix{
U_{X'} \ar[d]\ar@{^{(}->}[r]& X'\ar[d]_-{f'}\ar[r]^-g & X\ar[d]^-f\\
U_{Y'} \ar@{^{(}->}[r]& Y'\ar[r]_-h& Y
}
$$
such that $g$ and $h$ are projective 
birational morphisms, $X'$ and $Y'$ are normal projective  
varieties, the inclusions $U_{X'}\subset 
X'$ and $U_{Y'}\subset Y'$ are toroidal embeddings without 
self-intersection satisfying the 
following conditions: 
\begin{itemize}
\item[(a)] $f'$ is toroidal with respect to $(U_{X'}\subset X')$ and 
$(U_{Y'}\subset Y')$,  
\item[(b)] $f'$ is equidimensional, 
\item[(c)] $Y'$ is smooth, 
\item[(d)] $X'$ has only quotient singularities, and 
\item[(e)] there exists a dense Zariski open set $U$ of $X$ such 
that $g$ is an isomorphism 
over $U$, $U_{X'}=g^{-1}(U)$, and $U\cap \Delta=\emptyset$. 
\end{itemize} 
Although it is not treated explicitly in \cite{abramovich-karu}, 
we can make $g:X'\to X$ satisfy condition (e) 
(see Remark \ref{g-rem7.1}). 

\begin{rem}\label{g-rem7.1}
In this remark, we will freely use the 
notation in \cite{abramovich-karu}. 
For condition (e), 
it is sufficient to prove that there exists a 
Zariski open set $U$ of $X$ 
such that $U_{X'}=m^{-1}_X(U)$ and that 
$m_X$ is an isomorphism over $U_{X'}$ 
in \cite[Theorem 2.1]{abramovich-karu}. 
Precisely speaking, we enlarge $Z$ and may assume that 
$X\setminus Z$ is a Zariski open set of the original 
$X$ in \cite[2.3]{abramovich-karu}, and 
enlarge $\Delta$ suitably in \cite[2.5]{abramovich-karu}. 
Then we can construct $m_X: X'\to X$ such that 
$U$ is a Zariski open set of $X$, 
$U_{X'}=m^{-1}_X(U)$, and $m_X: U_{X'}\to U$ is an isomorphism. 
\end{rem}
 
By condition (e), we have $\Supp \Delta'\subset X'\setminus U_{X'}$, where 
$\Delta'$ is a $\mathbb Q$-divisor defined by $K_{X'}+\Delta'=g^*(K_X+\Delta)$. 
We note that $(X', (\Delta')^{>0})$ is plt (resp.~lc) and that 
$$
H^0(X, \mathcal O_X(\lfloor m(K_X+\Delta)\rfloor))
\simeq H^0(X', \mathcal O_{X'}(\lfloor m(K_{X'}+(\Delta')^{>0})\rfloor))
$$
holds for every nonnegative integer $m$. 
Therefore, by replacing $f:X\to Y$ and $(X, \Delta)$ with $f': X'\to Y'$ 
and $(X', (\Delta')^{>0})$, respectively, 
we may assume that $f:X\to Y$ satisfies conditions (a), (b), (c), (d), and $\Supp 
\Delta\subset X\setminus U_X$, where 
$(U_X\subset X)$ is the toroidal structure in (a).  

Since $\kappa (X, K_X+\Delta)>0$, we can 
take a divisible positive integer $a$ 
such that 
$$H^0(X, \mathcal O_X(a(K_X+\Delta)))\ne 0. 
$$  
Therefore, there exists an effective Cartier divisor $L$ on $X$ such that 
$$
a(K_X+\Delta)\sim L.  
$$
We put 
$$
G:=\max \{ N \,|\, \text{$N$ is an effective $\mathbb Q$-divisor 
on $Y$ such that $L\geq f^*N$}\}.  
$$ 
Then we set 
$$
D:=\frac{1}{a}G \quad \text{and}\quad F:=\frac{1}{a}(L-f^*G). 
$$ 
By definition, we have 
$$
K_X+\Delta\sim _{\mathbb Q} f^*D+F. 
$$
\begin{lem}\label{g-lem7.2}
For every nonnegative integer $i$, 
the natural map 
$$
\mathcal O_Y\to f_*\mathcal O_X(\lfloor iF\rfloor)
$$ 
is an isomorphism. 
\end{lem}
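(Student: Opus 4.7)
The plan is to establish both the natural map $\mathcal{O}_Y\hookrightarrow f_*\mathcal{O}_X(\lfloor iF\rfloor)$ and its inverse. The map itself is immediate: $G$ is effective with $L\geq f^*G$ by the definition of $G$, so $F\geq 0$ and hence $\lfloor iF\rfloor\geq 0$; combined with $f_*\mathcal{O}_X=\mathcal{O}_Y$, valid because the Iitaka fibration has connected fibers, this yields the inclusion. The substantive content is therefore the reverse inclusion.

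The crucial consequence of the definition of $G$, which I would extract first, is that for every prime divisor $P\subset Y$ there exists an irreducible component $E_0=E_0(P)$ of $f^{-1}(P)$ with $\operatorname{ord}_{E_0}(F)=0$. Indeed, since $f$ is equidimensional, $f^*P=\sum_j m_jE_j$, and the maximality of $\operatorname{coeff}_P(G)=\min_j\operatorname{ord}_{E_j}(L)/m_j$ forces the minimum to be attained by some $E_0$, giving $\operatorname{ord}_{E_0}(aF)=\operatorname{ord}_{E_0}(L)-m_{E_0}\operatorname{coeff}_P(G)=0$.

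Next I would verify that $f_*\mathcal{O}_X(\lfloor iF\rfloor)$ has generic rank one. Since $f^*G$ has vertical support, $F|_{X_{\overline\eta}}=\tfrac{1}{a}L|_{X_{\overline\eta}}$; using $L\sim a(K_X+\Delta)$ and adjunction on the geometric generic fiber, this is $\mathbb{Q}$-linearly equivalent to $K_{X_{\overline\eta}}+\Delta|_{X_{\overline\eta}}$. Since $\kappa(X_{\overline\eta}, K_{X_{\overline\eta}}+\Delta|_{X_{\overline\eta}})=0$ by construction of the Iitaka fibration, $h^0$ of any positive multiple is at most one; bounding $\lfloor iF|_{X_{\overline\eta}}\rfloor\leq \lceil i/a\rceil L|_{X_{\overline\eta}}$ from above yields $h^0\leq 1$, while the effectivity of $\lfloor iF\rfloor$ yields $h^0\geq 1$.

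Combining the two inputs, any local section $s\in\Gamma(V,f_*\mathcal{O}_X(\lfloor iF\rfloor))$ is a rational function on $X$ with $(s)+\lfloor iF\rfloor\geq 0$ on $f^{-1}V$; restricted to the geometric generic fiber, the rank-one conclusion forces $s$ to be a $K(Y)$-scalar multiple of the constant $1$, so $s\in K(Y)$. For each prime $P\subset V$, evaluating $(s)+\lfloor iF\rfloor\geq 0$ along $E_0(P)$ gives $m_{E_0}\operatorname{ord}_P(s)\geq 0$ and hence $\operatorname{ord}_P(s)\geq 0$, so $s\in\mathcal{O}_Y(V)$. I expect the generic rank step to be the main technical point: one must handle $\lfloor\cdot\rfloor$ of $\mathbb{Q}$-divisors uniformly in $i$, not only for $i$ divisible by $a$, and pass cleanly to the geometric generic fiber so as to apply the $\kappa=0$ hypothesis through adjunction.
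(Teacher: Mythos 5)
Your proof is correct and follows essentially the same route as the paper's: both rest on the rank-one statement coming from $\kappa (X_{\overline \eta}, F|_{X_{\overline \eta}})=0$ together with the equidimensionality of $f$, and on the fact that the maximality of $G$ prevents $\Supp F$ from containing a whole fiber over the generic point of any prime divisor of $Y$. The only difference is one of packaging: the paper concludes via reflexivity, invertibility on the smooth $Y$, and ``isomorphism in codimension one implies isomorphism,'' whereas you unwind that into an explicit valuative argument on local sections.
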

\begin{proof}
By definition, $F$ is effective. 
Therefore, we have a natural nontrivial map 
$$\mathcal O_Y\to f_*\mathcal O_X(\lfloor iF\rfloor)
$$ for every nonnegative integer $i$. 
By $\kappa (X_{\overline \eta}, K_{X_{\overline \eta}}+\Delta|_{X_{\overline 
\eta}})=0$, 
we have $\kappa (X_{\overline \eta}, F|_{X_{\overline \eta}})=0$. 
Thus, we see that 
$f_*\mathcal O_X(\lfloor iF\rfloor)$ is a reflexive sheaf of 
rank one since $f$ is equidimensional. 
Moreover, since $Y$ is smooth, $f_*\mathcal O_X(\lfloor iF\rfloor)$ is 
invertible. Let $P$ be any prime divisor on $Y$. 
By construction, $\Supp F$ does not contain the whole fiber 
of $f$ over the generic point of $P$. 
Therefore, we obtain that 
$\mathcal O_Y\to f_*\mathcal O_X(\lfloor iF\rfloor)$ is 
an isomorphism in codimension one. 
This implies that the natural map 
$$\mathcal O_Y\to f_*\mathcal O_X(\lfloor iF\rfloor)$$ is 
an isomorphism for every nonnegative integer $i$. 
\end{proof} 
By construction and Lemma \ref{g-lem7.2}, 
there exists a divisible positive integer $r$ such that 
$r(K_X+\Delta)$ and $rD$ are Cartier and that 
$$
H^0(X, \mathcal O_X(mr(K_X+\Delta)))\simeq H^0(Y, \mathcal O_Y(mrD))
$$ 
holds for every nonnegative integer $m$. 
In particular, $D$ is a big $\mathbb Q$-divisor on $Y$. 
We put $B:=\Delta-F$ and consider $K_X+B\sim _{\mathbb Q}f^*D$. 
Let $p: V\to X$ be a birational morphism from a smooth projective 
variety $V$ such that 
$K_V+B_V=p^*(K_X+B)$ and that 
$\Supp B_V$ is a simple normal crossing divisor.  
$$
\xymatrix{
V\ar[r]^-p \ar[dr]_-\pi& X\ar[d]^-f \\ 
& Y
}
$$ 
It is obvious that $K_V+B_V\sim _{\mathbb Q}\pi^*D$ holds. 
Since $p_*\mathcal O_V(\lceil -(B^{<1}_V)\rceil)\subset 
\mathcal O_X(kF)$ for some divisible positive integer $k$, 
the natural map 
$\mathcal O_Y\to \pi_*\mathcal O_V(\lceil -(B^{<1}_V)\rceil)$ is an isomorphism. 
For any prime divisor $P$ on $Y$, we put 
$$
b_P:=\max \left\{t \in \mathbb Q\, |\, 
 {\text{$(X, B+tf^*P)$ is sub lc over the generic point of $P$}} \right\}.  
$$ 
Then we set  
$$
B_Y:=\sum _P (1-b_P)P 
$$ 
as in Section \ref{g-sec5}. Since $K_V+B_V=p^*(K_X+B)$ and 
the natural map $\mathcal O_Y\to \pi_*\mathcal O_V(\lceil -(B^{<1}_V)\rceil)$ is 
an isomorphism, $B_Y$ is the discriminant $\mathbb Q$-divisor 
of $\pi: (V, B_V)\to Y$ and is a boundary $\mathbb Q$-divisor 
on $Y$ 
(see Lemma \ref{g-lem5.1}). 
By construction, we have $b_P=1$ if $P\cap U_Y\ne \emptyset$, 
where $(U_Y\subset Y)$ is the toroidal structure in (a). 
Therefore, $\Supp B_Y\subset Y\setminus U_Y$. 

\medskip 

From now on, we assume that $(X, \Delta)$ is plt and $\kappa(X, K_X+\Delta)=2$. 
Then $(V, B_V)$ is sub plt and $Y$ is a smooth projective 
surface. 
As in Section \ref{g-sec5}, 
we write 
$$
D=K_Y+B_Y+M_Y,  
$$ 
where $M_Y$ is the moduli $\mathbb Q$-divisor 
of $K_V+B_V\sim _{\mathbb Q}\pi^*D$.  
As we saw above, 
$\Supp B_Y\subset Y\setminus U_Y$, where $(U_Y\subset Y)$ is the 
toroidal structure in (a). 
In particular, this means that $\Supp B_Y$ is a 
simple normal crossing divisor 
on $Y$ because $Y$ is smooth. 
By Lemma \ref{g-lem4.1}, 
$\lfloor B_Y\rfloor$ is a disjoint union of 
some smooth prime divisors. 
Therefore, $(Y, B_Y)$ is plt. 
By Lemma \ref{g-lem6.1}, there exists a projective 
birational contraction morphism 
$\varphi:Y\to Y'$ onto a normal projective 
surface $Y'$ such that 
$D'=K_{Y'}+B_{Y'}+M_{Y'}$ is nef and big and that 
$D=\varphi^*D'+E$ for some 
effective $\varphi$-exceptional $\mathbb Q$-divisor $E$ on $Y$. 
Of course, $D'$, $K_{Y'}$, $B_{Y'}$, and $M_{Y'}$ are 
the pushforwards of $D$, $K_Y$, $B_Y$, and $M_Y$ by $\varphi$, respectively. 
$$
\xymatrix{
V \ar[d]_-\pi\ar[dr]^-{\pi'}& \\
Y \ar[r]_-\varphi& Y'
}
$$ 
By replacing $V$ birationally, we may further assume 
that the union of $\Supp B_V$ and $\Supp \pi^*E$ is a simple 
normal crossing divisor on $V$. 
We consider 
$$
K_V+B_V-\pi^*E\sim _{\mathbb Q} \pi'^*D'. 
$$ 
We note that the natural map 
$$
\mathcal O_{Y'}\to \pi'_*\mathcal O_V(\lceil -(B_V-\pi^*E)^{<1}\rceil)
$$
is an isomorphism since $\pi_*\mathcal O_V(\lceil -(B_V-\pi^*E)^{<1}\rceil)
\subset \mathcal O_Y(kE)$ for some divisible positive integer $k$ 
and $\mathcal O_{Y'}\overset{\sim}\longrightarrow \varphi_*\mathcal O_Y(kE)$. 
By construction, $(Y', B_{Y'})$ is plt (see Lemma \ref{g-lem6.1}) and 
$B_{Y'}$ is the discriminant $\mathbb Q$-divisor 
of $\pi': (V, B_V-\pi^*E)\to Y'$. 
Therefore, by Corollary \ref{g-cor5.5}, 
$D'$ is semi-ample. 
Thus, we obtain that 
$$
\bigoplus _{m\geq 0} H^0(Y, \mathcal O_Y(\lfloor mD\rfloor))
\simeq 
\bigoplus_{m\geq 0} H^0(Y', \mathcal O_{Y'}(\lfloor mD'\rfloor))
$$ 
is a finitely generated $\mathbb C$-algebra. 
This implies that the log canonical 
ring $R(X, \Delta)$ of $(X, \Delta)$ is also a finitely generated $\mathbb C$-algebra. 

Hence we have finished the proof of Theorem \ref{g-thm1.2}. 

\end{document}